\theoremstyle{plain}
\numberwithin{equation}{section}
\newtheorem{theorem}{Theorem}[section]
\newtheorem{corollary}[theorem]{Corollary}
\newtheorem{lemma}[theorem]{Lemma}
\theoremstyle{definition}
\newtheorem{definition}[theorem]{Definition}
\newtheorem{remark}[theorem]{Remark}
\def\rmS{\mathrm{S}}
\def\rmU{\mathrm{U}}
\def\gep{\epsilon}
\def\gL{\Lambda}
\newcommand{\fa}{\mathfrak{a}}
\newcommand{\fg}{\mathfrak{g}}
\newcommand{\fk}{\mathfrak{k}}
\newcommand{\fq}{\mathfrak{q}}
\newcommand{\fu}{\mathfrak{u}}
\newcommand{\fz}{\mathfrak{z}}
\newcommand{\R}{\mathbb{R}}
\newcommand{\real}{\mathbb{R}}
\newcommand{\C}{\mathbb{C}}
\newcommand{\D}{\mathbb{D}}
\newcommand{\Z}{\mathbb{Z}}
\newcommand{\pr}{\mathop{\mathrm{pr}} }
\newcommand{\gl}{\lambda}
\newcommand{\eps}{\varepsilon}
\renewcommand{\Re}{\mathrm{Re}}
\renewcommand{\Im}{\mathrm{Im}}
\newcommand{\Exp}{\mathrm{Exp}}
\theoremstyle{plain}
\newcommand{\LK}{\Lambda^+(M)}
\newcommand{\Drc}{\overline{D}_r(o)}
\newcommand{\Bsc}{\overline{D}_s(o)}
\newcommand{\Bec}{\overline{D}_{\epsilon}(o)}
\newcommand{\SO}{\mathrm{SO}}
\newcommand{\Supp}{\mathrm{Supp}}
\newcommand{\PW}{\mathrm{PW}}
\newcommand{\LM}{\Lambda^+(M)}
\newcommand{\ra}{\rangle}
\newcommand{\la}{\langle}
\def\sideremark#1{\ifvmode\leavevmode\fi\vadjust{\vbox to0pt{\vss
 \hbox to 0pt{\hskip\hsize\hskip1em
\vbox{\hsize2cm\tiny\raggedright\pretolerance10000
 \noindent #1\hfill}\hss}\vbox to8pt{\vfil}\vss}}}%
\begin{document}
\setcounter{section}{0}
\title[Distributions on symmetric spaces]{Fourier transforms of spherical
distributions on compact symmetric spaces}
\author{Gestur \'Olafsson and Henrik Schlichtkrull}
\date{April 29, 2009}
\subjclass{43A85,  53C35, 22E46}
\keywords{Symmetric space; Distributions; Fourier transform; Paley-Wiener theorem}
\thanks{The research of
G. \'Olafsson was supported by NSF grants DMS-0402068 and DMS-0801010}
\begin{abstract} In our previous articles \cite{OS} and \cite{OS2}
we studied Fourier series on a symmetric space $M=U/K$
of the compact type. In
particular, we proved a Paley-Wiener type theorem for the smooth
functions on $M$, which have sufficiently small support and
are $K$-invariant, respectively $K$-finite.
In this article we extend these results to $K$-invariant distributions
on $M$. We show
that the Fourier transform of a distribution, which
is supported in a sufficiently small ball around the base point, extends
to a holomorphic function of exponential type. We describe the image
of the Fourier transform in the space of holomorphic functions.
Finally, we characterize the singular
support of a distribution in terms of its Fourier transform, and we
use the Paley-Wiener theorem to characterize the distributions
of small support, which are in the range of a
given invariant differential operator.
The extension from symmetric spaces of compact
type to all compact symmetric spaces is sketched in an appendix.
\end{abstract}
\maketitle

\section*{Introduction}
\noindent
The Paley-Wiener theorem for $\real^n$ describes
(in the version due to L.~Schwartz) the image by
the Fourier transform of the space of compactly supported
smooth functions on $\real^n$. A similar theorem describes
the image of the space of compactly supported distributions.
More precisely, let $C_c^\infty(\real^n)$ and
$C^{-\infty}_c(\real^n)$ denote the spaces of
of compactly supported smooth functions and distributions,
respectively.
Then the Fourier image of $C_c^\infty(\real^n)$ is the space
of entire functions $F$ on $\C^n$ of exponential type, that is,
for which there exist $r>0$
and for every $N\in \Z^+=\{0,1,2,\ldots \}$ a constant $C_N$ such that
$$|F(\lambda )|\le C_N (1+|\gl |)^{-N}e^{r|\mathrm{Im}(\lambda)|}$$
for all $\lambda \in \C^n$. Furthermore, the Fourier image
of $C_c^{-\infty}(\real^n)$ is the space
of entire functions $F$ for which there exist $r>0$ and for
some $N\in \Z^+$ a constant $C_N$ such that
$$|F(\lambda )|\le C_N(1+|\gl |)^{N}e^{r|\mathrm{Im}(\lambda)|}$$
for all $\lambda \in \C^n$. An important aspect of these
theorems is that the smallest exponent $r$
in the estimates matches with the radius of the smallest
closed ball $\bar B_r=\{x\in\real^n\mid |x |\leq r\}$ containing the
support of the function or distribution.
Finally, by an analogous result due to
H\"ormander, the singular support of a compactly supported
distribution is contained in $\bar B_r$
if and only if its Fourier transform $F$ satisfies the following
condition. There exists $N\in \Z^+$ and for every
$m\in\Z^+$ a constant $C_m$ such that
$$|F(\lambda )|\le C_m(1+|\gl |)^{N}e^{r|\mathrm{Im}(\lambda)|}$$
for all $\gl\in\C^n$ with $|\mathrm{Im}(\lambda)|\leq
m\log(1+|\gl|)$. See \cite{Horm}, Section 1.7.

There are several generalizations of these theorems to
settings where $\real^n$ is replaced by a symmetric space $X$.
The most general results have been obtained for $X=G/K$ a
Riemannian symmetric space of the non-compact type, by
Gangolli \cite{Ga} and Helgason \cite{He1,He2}
for smooth functions, and by Eguchi, Hashizume and Okamoto \cite{EHO}
for distributions. Again the exponent matches with the
radius of the support. For functions this is seen in the
references just mentioned, and for distributions it is shown
by Dadok \cite{Dadok}, who gives a proof of the distributional
Paley-Wiener theorem different from that of \cite{EHO}.
Combining these results with H\"ormander's theorem
for $\real^n$, a characterization of singular
supports is easily deduced, see \cite{Dadok}
(see also \cite{AK}).

In the present paper we investigate the generalization of the
theorems for distributions to a Riemannian symmetric space $M=U/K$
of the compact type. In previous papers \cite{OS} and
\cite{OS2}, we have treated the case of $K$-invariant, respectively,
$K$-finite smooth functions on $M$. These papers generalized
partial results in \cite{BOP, Camp, Gon, TomK}.
In contrast to the non-compact cases $G/K$,
the results obtained for $M$ are \textit{local} in the sense that
they are only valid for functions supported in sufficiently
small balls, with an explicit (but not necessarily optimal)
upper bound for the radius.
In the present paper we use the results of \cite{OS} to extend the
$K$-invariant Paley-Wiener theorem to $K$-invariant distributions
on $M$, including the analogous result for singular support.
The more general case of $K$-finite distributions
can be treated similarly, based on \cite{OS2}
(the details are omitted). In an appendix at the end of the paper
we briefly discuss the extension to all compact symmetric spaces
of the results from \cite{OS} as well as those of the present paper.

The Paley-Wiener theorems have also been generalized to
non-Riemannian symmetric spaces. General reductive
symmetric spaces are treated in \cite{BanS} and \cite{BanS2}.
The case of a reductive Lie group (which can be considered as
a symmetric space), was earlier treated in \cite{Arthur},
see also \cite{BSInd} and \cite{D}.
Hyperbolic spaces were treated in \cite{A}. Some partial
results have been obtained for the Fourier-Laplace
transform on causal symmetric spaces, see \cite{AO, AOS, OP1}
and the overview in \cite{OlPasq}.

The article is organized as follows.
In Section \ref{s: notation} we introduce the
basic notation. In Section  \ref{s: Fourier} we
discuss the parametrization of the irreducible
unitary $K$-spherical representations and the related
Fourier transform.
Let $f$ be a $K$-spherical smooth function (or distribution)
on $M$, then its Fourier transform $\widetilde{f}(\gl)$
is defined for $\gl$ in the semi-lattice $\Lambda^+(M)$
consisting of the highest weights $\lambda\in\fa^*_\C$
of irreducible $K$-spherical representations (the weights are purely
imaginary linear forms on the maximal abelian subspace $\fa$).
In  Section \ref{s: main thm} we recall the main
result of \cite{OS} which, in short, says the following.
Assume $f$ is smooth, then the Fourier transform
$\widetilde{f}$ extends to
a holomorphic function on $\fa_\C^*$
of exponential type, and the best exponent of growth
is equal to the radius of the smallest closed ball
around the origin, which contains the support of $f$.
Here it is required that
the support of $f$ is sufficiently small, as explained
in the Remark 4.3 in \cite{OS}.

Section \ref{s:distributions} contains the main results of this article.
First, we introduce the Paley-Wiener space $\PW_r^*(\fa)$ of
holomorphic functions on $\fa_\C^*$ such that:
\begin{itemize}
\item[a)] There exists $k\in\Z^+$ and $C>0$ such that
for all $\lambda\in\fa_\C^*$,
$$|\Phi (\gl)|\leq C(1+|\gl|)^{k} e^{r|\Re\gl |}\, $$
\item[b)] $\Phi (w(\gl+\rho)-\rho)=\Phi(\gl)$ for
all $w$ in the Weyl group $W$.
\end{itemize}

Let $ C^{-\infty}_r(M)^K$ denote the space of $K$-invariant distributions
on $M$ with support  in a closed ball of radius $r$
around the origin.
Our main results are:
\medskip

\noindent
\textbf{Theorem \ref{t: PW-d}}
\textrm{(Local Paley--Wiener theorem for distributions)}
\textit{There exists
$R>0$ such that the following holds
for each $0<r<R$.
}

\begin{itemize}
\item[i)] \textit{Let $F\in C^{-\infty}_r(M)^K$.
Then the Fourier transform $\tilde{F} \colon \Lambda^+(M)\to\C$
extends  to a function in $\PW_r^*(\fa)$.}

\item[ii)] \textit{Let $\Phi \in\PW_r^*(\fa)$. There exists
a unique distribution $F\in C^{-\infty}_r(M)^K$ such that
$\tilde{F} (\mu)=\Phi(\mu)$ for all $\mu\in \Lambda^+(M)$.}

\item[iii)] \textit{The functions in the Paley-Wiener space $\PW_r^*(\fa )$
are uniquely determined by their values on $\Lambda^+(M)$.}
\end{itemize}
\textit{Thus,
the Fourier transform followed by the extension gives a bijection}
$$C_r^{-\infty}(M )^K\simeq \PW_r^*(\fa )\, .$$

\smallskip
\noindent
\textbf{Theorem \ref{t:singsupp}}
\textrm{(Characterization of singular support)}
\textit{Let
$R$ be as above and let
$0<s\le r<R$. Let $F\in C^{-\infty}_r(M)^K$. Then the singular
support of $F$ is contained in a closed ball of radius $s$ if and only if there exists
$N\in \Z$ such that for each $m\in \Z^+$ the holomorphic
extension of $\tilde{F}$ satisfies
$$|\tilde{F}(\lambda)|\le C_m (1+|\lambda |)^N e^{s|\Re \lambda |}$$
for all $\lambda \in \fa_\C^*$ with $|\Re \lambda |\le m \log
(1+|\lambda |)\, .$
}

\medskip

One of the consequences of the Paley-Wiener theorem is a condition
for the solvability of invariant differential equations $DT=F$
in the space $C^{-\infty}_r(M)^K$.
This is stated in Theorem \ref{t:solvability}.

\section{Basic notation}
\label{s: notation}
\noindent
Let $M$ be a connected Riemannian symmetric space of the compact type.
Then there exists a compact connected semisimple Lie group $U$
acting on $M$ by isometries
and a closed subgroup $K\subset U$ such that $M=U/K$. Furthermore, there
exists an involution $\theta : U\to U$ such that
$U^\theta_0\subset K\subset U^\theta$. Here $U^\theta$ denotes the
subgroup of $\theta$-fixed points, and
$U^\theta_0$ its identity component. We denote the base point $eK$ in
$M$ by $o$.

Let $\fu$ denote the Lie algebra of $U$, then $\theta$
induces an involution of $\fu$ (denoted by the same symbol).
Let $\fu=\fk\oplus\fq$ be the corresponding decomposition
in eigenspaces for $\theta$. Let $\la\,\cdot\,,\,\cdot\,\ra$
be the inner product on $\fu$ defined by
$\langle X,Y\rangle=-B(X,Y)$, where $B$ is the Killing
form.
We assume that the Riemannian metric $g$ of $M$ is
normalized such that it agrees with $\langle\,\cdot\,,\,\cdot\,\rangle$
on the tangent space $\fq\simeq T_oM$.
We denote by $\exp$ the exponential map $\fu\to U$,
and by $\Exp$ the map $\fq\to M$
given by $\Exp(X)=\exp(X) \cdot o$.
Denote by $ B_r(0)$ the open ball in $\fq$ of radius $r>0$ and centered
at $0$ and  $D_r(o)$ the open metric ball in $M$ of radius $r>0$ and centered at
$o$. Similarly $\bar B_r (0)$ and $\bar D_r (o)$ stand for the closed balls.
The exponential map
$\Exp$ is surjective and an analytic diffeomorphism $B_r(0)\to D_r(o)$
for $r $ sufficiently small.

Let $\fa\subset\fq$ be a maximal abelian subspace,
$\fa^*$ its dual space, and $\fa^*_\C$ the complexified
dual space. Then $\langle \cdot ,\cdot \rangle$ defines an inner product on
$\fa^*$. By sesquilinear extension we obtain inner products on $\fa_\C^*$
and $i\fa^*$, which we shall denote by the same symbol.
The corresponding norm is denoted by
$|\cdot |$.

We denote by $\Sigma$  the set of non-zero
(restricted) roots of
$\fu_\C$ with respect to $\fa_\C$. Then $\Sigma\subset i\fa^*$.
Furthermore,
$\Sigma^+$  stands for a fixed set of positive roots and
$\rho\in i\fa^*$ denotes half of the sum of the roots in $\Sigma^+$ counted
with multiplicities.
The corresponding Weyl group, generated by the reflections
in the roots, is denoted $W$.

\section{Fourier analysis on $M$}
\label{s: Fourier}
\noindent
In this section we recall the basic facts on Fourier series
on $M$ and  the parametrization
of $K$-spherical representations.
Let $\pi$ denote an irreducible unitary representation
of $U$, and $V_\pi$ the Hilbert space on which $\pi$ acts.
Let
$$V^K_\pi=\{v\in V_\pi\mid (\forall k\in K)\, \pi(k)v=v\}\, .$$
If $V^K_\pi\not= \{0\}$ then $\dim V^K_\pi=1$ and $\pi$ is
said to be $K$-\textit{spherical}. If $\pi$ is $K$-spherical, then
$e_\pi$ will denote a fixed choice of a unit vector in $V^K_\pi$.

Recall the
following parametrization of $K$-spherical irreducible
representations of $U$, due to Helgason (see \cite{GGA}, p.\ 535).
Denote by $\tilde{U}$ the universal covering of $U$
and by $\kappa$ the canonical projection $\tilde{U}\to U$.
Then $\theta$ defines an involution $\tilde\theta$ on $\tilde{U}$, and
the group $\tilde{K}$ of $\tilde\theta$-fixed points
is connected.
If $\pi$ is a $K$-spherical representation of $U$, then $\pi\circ
\kappa$ is a $\tilde{K}$-spherical
representation of $\tilde{U}$.

\begin{theorem}
\label{t: Helgason classification} The map $\pi \mapsto \mu$, where
$\mu\in i\fa^*$
is the highest weight of $\pi$, induces a bijection between the set of
equivalence classes of irreducible $\tilde{K}$-spherical
representations of $\tilde{U}$ and the set
\begin{equation}
\label{e: Helgason condition}
\Lambda^+(\tilde{U}/\tilde{K})
=\{\mu\in i\fa^*\mid (\forall \alpha \in\Sigma^+)\,\,
\frac{\langle\mu,\alpha\rangle}{\langle\alpha,\alpha\rangle}
\in\Z^+\}\, .
\end{equation}
\end{theorem}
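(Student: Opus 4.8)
The plan is to reduce the statement to the Cartan--Weyl highest weight classification of the irreducible representations of the compact, connected, simply connected group $\tilde{U}$, and then to characterize, among all highest weights, those that admit a $\tilde{K}$-fixed vector; this latter characterization is exactly the Cartan--Helgason theorem, and it is there that the condition in \eqref{e: Helgason condition} arises.

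First I would set up a compatible maximal torus. Since $\fa$ is maximal abelian in $\fq$, its centralizer in $\fq$ is $\fa$ itself, so if $\fh_\fk$ is a maximal abelian subalgebra of the centralizer of $\fa$ in $\fk$, then $\fh:=\fa\oplus\fh_\fk$ is a maximal abelian subalgebra of $\fu$; put $T=\exp\fh$, a maximal torus of $\tilde{U}$. Let $\Delta=\Delta(\fu_\C,\fh_\C)\subset i\fh^*$ be the corresponding root system. Restriction to $\fa_\C$ carries $\Delta$ onto $\Sigma\cup\{0\}$, and, ordering the roots by a regular element of $\fa$ followed by one of $\fh_\fk$, I obtain positive systems $\Delta^+$ and $\Sigma^+$ that are compatible, i.e. $\alpha\mapsto\alpha|_{\fa}$ maps $\{\alpha\in\Delta^+:\alpha|_{\fa}\neq0\}$ onto $\Sigma^+$. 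Because $\tilde{U}$ is simply connected, the highest weight theorem gives a bijection $\pi_\lambda\leftrightarrow\lambda$ between equivalence classes of irreducible unitary representations of $\tilde{U}$ and the $\Delta^+$-dominant weights $\lambda$ in the full integral weight lattice of $(\fu_\C,\fh_\C)$; each such $\lambda$ is purely imaginary on $\fh$.

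The heart of the matter is the claim that $\pi_\lambda$ is $\tilde{K}$-spherical if and only if: (a) $\lambda$ vanishes on $\fh_\fk$, so that $\lambda=\mu$ for a unique $\mu\in i\fa^*$; and (b) $\langle\mu,\alpha\rangle/\langle\alpha,\alpha\rangle\in\Z^+$ for every $\alpha\in\Sigma^+$. For the ``only if'' part, let $0\neq e\in V_{\pi_\lambda}^{\tilde{K}}$. Since $\fh_\fk\subset\fk$, every $\fh$-weight occurring in $e$ vanishes on $\fh_\fk$; combining this with the fact that a $\tilde{K}$-spherical representation is self-contragredient (hence has weight set stable under $\nu\mapsto-\nu$, with lowest weight $-\lambda$) and that $e$ pairs nontrivially with the highest and lowest weight lines, a short weight-theoretic argument yields $\lambda|_{\fh_\fk}=0$. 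Condition (b) then follows from the classical $\mathfrak{sl}_2$-argument: for each $\alpha\in\Sigma^+$ one uses the $\tilde\theta$-stable subalgebra (of type $\mathfrak{sl}_2$ or $\mathfrak{sl}_2\times\mathfrak{sl}_2$) with split element $H_\alpha\in i\fa$ assembled from the $\fh_\C$-root vectors that restrict to $\alpha$; applying lowering operators to the highest weight vector and raising operators to $e$, finite dimensionality forces $\langle\mu,\alpha\rangle/\langle\alpha,\alpha\rangle$ to be a nonnegative integer.

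For the ``if'' part, given $\mu$ satisfying (b) I would extend it by zero on $\fh_\fk$ to a linear form $\lambda$ on $\fh$ and check that $\lambda$ is a $\Delta^+$-dominant integral weight: for $\alpha\in\Delta^+$ with $\alpha|_{\fa}=0$ one has $\langle\lambda,\alpha\rangle=0$, while for $\alpha|_{\fa}=\beta\in\Sigma^+$ one has $\langle\lambda,\alpha\rangle=\langle\mu,\beta\rangle$, and a description of the restriction map $\Delta\to\Sigma\cup\{0\}$ --- the relative root lengths, the root multiplicities, and the occurrence of pairs $\beta,2\beta\in\Sigma^+$ (to which (b) is then also applied) --- shows that (b) makes all the numbers $\langle\lambda,\alpha^\vee\rangle$ nonnegative integers. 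With $\lambda$ dominant integral, $\pi_\lambda$ exists, and I would produce a nonzero $\tilde{K}$-fixed vector in it, for instance from the Borel--Weil realization $V_{\pi_\lambda}\cong H^0(\tilde{U}/T,\cL_\lambda)$ together with $\lambda|_{\fh_\fk}=0$ (which makes $\cL_\lambda$ trivial over the closed $\tilde{K}_\C$-orbit and lets one average a local holomorphic section over $\tilde{K}$), or by directly constructing the zonal spherical function on $\tilde{U}/\tilde{K}$. Injectivity of $\pi\mapsto\mu$ is then immediate --- by (a), $\lambda$ is recovered from $\mu$ by extension by zero, and $\lambda\mapsto\pi_\lambda$ is injective --- and (a) together with (b) says precisely that $\mu$ ranges over $\Lambda^+(\tilde{U}/\tilde{K})$. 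The step I expect to be the main obstacle is the structural/arithmetic point just indicated: in the ``only if'' direction the fixed vector must genuinely upgrade the half-integrality coming from the ambient root system $\Delta$ to the integrality in (b), and in the ``if'' direction (b) must be shown to suffice for the zero-extension $\lambda$ to be a weight of $\tilde{U}$; both rest on the precise way the roots of $(\fu_\C,\fh_\C)$ restrict to $\fa_\C$, while everything else is standard highest weight theory together with the Borel--Weil theorem.
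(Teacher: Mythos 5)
The paper does not prove this statement: it is quoted as the Cartan--Helgason theorem with a reference to \cite{GGA}, p.~535. So there is no argument in the paper to compare against; your sketch is an attempt to reconstruct the proof from \cite{GGA}, Chapter~V, Section~4, and the overall route (compatible maximal torus $\fh=\fa\oplus\fh_\fk$, highest-weight classification for the simply connected $\tilde U$, then the two conditions (a) and (b) characterizing sphericity) is exactly the right one.

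However, in the ``only if'' direction one assertion is false and one genuine gap remains. You claim that a $\tilde K$-spherical representation is self-contragredient; this fails in general. For $\SU(3)/\SO(3)$ the representation $\Sy^2\C^3$, of highest weight $2\omega_1$, contains the trivial $\SO(3)$-type (it is the symmetric square of the standard $\SO(3)$-representation, so the quadratic form gives an invariant vector), yet its contragredient has highest weight $2\omega_2\neq 2\omega_1$. Fortunately self-contragredience does no work in your argument: once one knows the $\tilde K$-fixed vector $e$ has nonzero component in the $\lambda$-weight space, the conclusion $\lambda|_{\fh_\fk}=0$ follows directly from the fact that every weight occurring in $e$ kills $\fh_\fk$. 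The real gap is precisely that nonzero pairing of $e$ with the highest weight line, which you state but do not justify. This is the crux of the Cartan--Helgason argument. The standard way to obtain it is the complexified Iwasawa decomposition $\fu_\C=\fk_\C+\fb$ (for a Borel $\fb=\fh_\C+\fn^+$ chosen so that the positive restricted roots come from $\Delta^+$), which makes the $K_\C$-orbit of the highest weight line open and dense in the flag variety; since $e$ extends to a $K_\C$-fixed vector, it cannot be orthogonal to $\pi(K_\C)v_\lambda$ without being orthogonal to a dense subset and hence to all of $V_\pi$. With that ingredient supplied, and with the root-restriction bookkeeping in the ``if'' direction (which you correctly flag as requiring care, including the $\beta,2\beta$ case), the proposal matches the proof cited by the paper.
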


For $\mu\in \Lambda^+(\tilde U/\tilde K)$,
let $(\pi_\mu,V_\mu)$ denote a fixed irreducible
unitary representation of $\tilde U$ with highest
weight $\mu$, and let $e_\mu=e_{\pi_\mu}$.
We denote by $\Lambda^+(U/K)$ the set of elements in
$\Lambda^+(\tilde{U}/\tilde{K})$,
for which the representation $\pi_\mu$ of $\tilde U$
descends to a representation of $U$ with a $K$-fixed
vector. Note that if it descends, it will have a $K_0$-fixed,
but not necessarily a $K$-fixed vector. This
was not made clear in \cite{OS}, Theorem 3.1, which
is only valid as stated
under the extra condition that $K$ is connected.
As an example take
$U=\SO (n)$, $n\ge 3$, and
$$K=\mathrm{O}(n-1)=\left\{\left. \begin{pmatrix} \det (A) & 0\cr
0 &A\end{pmatrix}\right| A\in O(n-1)\right\}\, .$$
Then $K_0\simeq \SO (n-1)$. The natural representation of $\SO (n)$ acting
on $\C^n$ has a $K_0$-fixed vector $e_1$, but is not $K$-spherical.
However, the restricted validity does not affect the main
results of \cite{OS}, as the exact description of $\Lambda^+(U/K)$ is
not used. However, the following property of $\Lambda^+(U/K)$ is used.

\begin{lemma}\label{semilattice}
The subset $\Lambda^+(U/K)\subset i\fa^*$ is closed under addition,
and it has full rank in $i\fa^*$,
that is, there exist $\mu_1,\dots\mu_n\in i\fa^*$
linearly independent where $n=\dim\fa$,
such that
\begin{equation}\label{sublattice}
\Z^+\mu_1+\dots+\Z^+\mu_n\subset \Lambda^+(U/K).
\end{equation}
\end{lemma}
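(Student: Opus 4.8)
The plan is to prove both assertions by relating $\Lambda^+(U/K)$ to the lattice $\Lambda^+(\tilde U/\tilde K)$ described in Theorem \ref{t: Helgason classification}, for which the analogous statements are transparent. First, closure under addition: if $\mu,\nu\in\Lambda^+(U/K)$, then $\pi_\mu$ and $\pi_\nu$ descend to $U$ and possess $K_0$-fixed vectors. The representation $\pi_{\mu+\nu}$ with highest weight $\mu+\nu$ occurs as the Cartan component inside $\pi_\mu\otimes\pi_\nu$; since the tensor product descends to $U$, so does each of its irreducible constituents, and in particular $\pi_{\mu+\nu}$. Moreover $e_\mu\otimes e_\nu$ is a $K_0$-fixed vector in $\pi_\mu\otimes\pi_\nu$, and its projection onto the Cartan component $V_{\mu+\nu}$ is again $K_0$-fixed; one checks this projection is nonzero because the highest weight vector of $V_{\mu+\nu}$ appears with nonzero coefficient when $e_\mu\otimes e_\nu$ is written in a weight basis (equivalently, $V_{\mu+\nu}$ has a nonzero $K_0$-fixed vector because it has a nonzero $\fk$-invariant vector, which follows from the spherical function argument). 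Hence $\mu+\nu\in\Lambda^+(U/K)$.

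Next, the full-rank assertion. By Theorem \ref{t: Helgason classification}, $\Lambda^+(\tilde U/\tilde K)$ consists of all $\mu\in i\fa^*$ with $\langle\mu,\alpha\rangle/\langle\alpha,\alpha\rangle\in\Z^+$ for every $\alpha\in\Sigma^+$; this is the intersection of a full-rank lattice in $i\fa^*$ with the closed positive Weyl chamber, and in particular it spans $i\fa^*$ and contains a sub-semilattice of the form $\Z^+\nu_1+\dots+\Z^+\nu_n$ with the $\nu_j$ linearly independent (take the fundamental spherical weights, or suitable positive integer multiples thereof). The issue is that these $\nu_j$ may fail to lie in the smaller set $\Lambda^+(U/K)$, because the corresponding representation of $\tilde U$ need not descend to $U$. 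To fix this, observe that $\ker\kappa$ is a finite central subgroup of $\tilde U$, so the representation $\pi_\mu$ descends to $U$ precisely when $\pi_\mu$ is trivial on $\ker\kappa$, which is a condition depending only on $\mu$ modulo a finite-index subgroup of the weight lattice. Concretely, there is a positive integer $N$ (for instance the exponent of the finite group $\ker\kappa$, or $|\ker\kappa|$) such that $N\mu\in\Lambda^+(U/K)$ for every $\mu\in\Lambda^+(\tilde U/\tilde K)$: indeed $\pi_{N\mu}$ occurs in $\pi_\mu^{\otimes N}$, on which $\ker\kappa$ acts by the $N$-th power of its action on $\pi_\mu$, hence trivially. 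Applying this with $\mu=\nu_j$ and setting $\mu_j:=N\nu_j$ gives linearly independent elements $\mu_1,\dots,\mu_n\in\Lambda^+(U/K)$, and by the closure under addition already established, $\Z^+\mu_1+\dots+\Z^+\mu_n\subset\Lambda^+(U/K)$, which is \eqref{sublattice}.

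I expect the main obstacle to be the bookkeeping around "has a $K$-fixed vector'' versus "has a $K_0$-fixed vector,'' which the paragraph preceding the lemma flags as a subtlety. The definition of $\Lambda^+(U/K)$ requires a genuinely $K$-fixed vector, not merely a $K_0$-fixed one, so in the closure-under-addition step one must verify that the nonzero $K_0$-fixed vector produced in $V_{\mu+\nu}$ is in fact $K$-fixed. This follows because $K/K_0$ is finite and acts on the one-dimensional space $V_{\mu+\nu}^{K_0}$ by a character; but that character is trivial, since it is trivial on both $V_\mu^{K_0}$ and $V_\nu^{K_0}$ (these being the restrictions to $K$ of the $K$-fixed vectors in $V_\mu$ and $V_\nu$), and the projection $V_\mu\otimes V_\nu\to V_{\mu+\nu}$ is $K$-equivariant. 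The same remark applies to the $\mu_j=N\nu_j$: one must check $\pi_{N\nu_j}$ has a $K$-fixed, not just $K_0$-fixed, vector. This can be arranged by replacing $N$ with a larger multiple if necessary, or more cleanly by invoking that once $\pi_{\nu_j}$ descends to $U$ with a $K_0$-fixed vector, a sufficiently high tensor power has its $K_0$-fixed line fixed by all of $K$ (again because $K/K_0$ acts by a finite-order character, which is killed by raising to an appropriate power). The remaining steps — occurrence of the Cartan component in a tensor product, nonvanishing of the projected fixed vector — are standard highest-weight theory and require no delicate estimates.
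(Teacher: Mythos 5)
Your proof is correct and follows the same basic strategy as the paper's, namely to realize $\pi_{\mu+\nu}$ as the Cartan component of $\pi_\mu\otimes\pi_\nu$ and to transfer spherical vectors through this embedding. The execution differs in two respects, though. To prove nonvanishing, you project $e_\mu\otimes e_\nu$ onto $V_{\mu+\nu}$ and argue via weights: the weight-$(\mu+\nu)$ space of $V_\mu\otimes V_\nu$ lies entirely in $V_{\mu+\nu}$ and $e_\mu\otimes e_\nu$ has a nonzero component there (because $e_\mu$, $e_\nu$ have nonzero highest-weight coefficients). The paper instead averages $v_\mu\otimes v_\nu$ over $\tilde K$ and shows the resulting vector $e$ is nonzero by computing $\int_{\tilde K}(\pi_\mu(l)\otimes 1)e\,dl=e_\mu\otimes e_\nu$. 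Both rest on the same normalization fact about the highest-weight component of a spherical vector; yours is more direct, the paper's is more self-contained. The larger structural difference is that the paper packages the obstruction into a single character $\gamma_\mu$ of $K^*/\tilde K$ (where $K^*=\kappa^{-1}(K)$) and proves $\gamma_{\mu+\nu}=\gamma_\mu\cdot\gamma_\nu$ once; then closure under addition and the inclusion $p\,\Lambda^+(\tilde U/\tilde K)\subset\Lambda^+(U/K)$ with $p=|K^*/\tilde K|$ are both immediate corollaries. You instead deal separately with the ``descends to $U$'' condition (via $\ker\kappa$) and with the ``$K$- vs.\ $K_0$-fixed'' condition (via a character of $K/K_0$), which forces you to iterate the tensor-power construction; this works, but the $\gamma_\mu$ formalism avoids the two-pass bookkeeping. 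One small error: the parenthetical ``(equivalently, $V_{\mu+\nu}$ has a nonzero $K_0$-fixed vector because it has a nonzero $\fk$-invariant vector, \dots)'' is not equivalent to what you need. The existence of a nonzero $\tilde K$-fixed vector in $V_{\mu+\nu}$ is just Helgason's theorem and does not by itself say the projection of $e_\mu\otimes e_\nu$ is nonzero; it is the weight argument preceding the parenthetical that does the work, so that clause should be deleted.
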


\begin{proof} It follows from Theorem \ref{t: Helgason classification}
that the result holds for
$\Lambda^+(\tilde U/\tilde K)$. In fact, in this case equality
is attained in (\ref{sublattice}) when $\mu_1,\dots,\mu_n$
are the fundamental weights (see \cite{Vretare}).

Let $K^*=\kappa^{-1}(K)$ then $\tilde K=K^*_0$,
and the quotient $K^*/\tilde K$ is a finite group, which
acts by a homomorphism $\gamma_\mu\colon K^*/\tilde K\to\C$
on the one-dimensional space $V^{\tilde K}_{\pi_\mu}$
for each $\mu\in\Lambda^+(\tilde{U}/\tilde{K})$.
In particular, we see that $\mu$ belongs to
$\Lambda^+(U/K)$ if and only if $\gamma_\mu$ is trivial.
We shall see below that
\begin{equation}\label{tensorproduct}
\gamma_{\mu+\nu}=\gamma_\mu\cdot\gamma_\nu
\end{equation}
for all $\mu,\nu\in\Lambda^+(\tilde{U}/\tilde{K})$.
It follows that
$\Lambda^+(U/K)$ is closed under addition, and also that
$\gamma_{p\mu}=1$, where $p$ is the
order of $K^*/K$. Hence $p\Lambda^+(\tilde U/\tilde K)\subset
\Lambda^+(U/K)$, and thus
the lemma follows from (\ref{tensorproduct}).

For each $\mu\in \Lambda^+(\tilde U/\tilde K)$, let
$e_\mu=e_{\pi_\mu}\in V_\mu=V_{\pi_\mu}$ denote the chosen
$\tilde K$-fixed unit vector, and let $v_\mu\in V_\mu$ be a
highest weight vector normalized such that $\langle v_\mu, e_\mu
\rangle =1$. Then $\int_{\tilde K} \pi_\mu(k)v_\mu\,dk=e_\mu$.

Consider the tensor product $V_\mu\otimes V_\nu$. It is
well known that the representation $V_{\mu+\nu}$ occurs
with multiplicity one in the tensor product, and that
$v_\mu\otimes v_\nu$ is a highest weight vector in $V_{\mu+\nu}$.
The vector
$$e:=\int_{\tilde K} \pi_\mu(k)v_\mu\otimes \pi_\nu(k)v_\nu\, dk
\in V_{\mu+\nu}$$
is $\tilde K$-fixed.
Using Fubini's theorem and the invariance of Haar measure, we see that
\begin{equation}\label{emuplusnu}
\int_{\tilde K} (\pi_\mu(l)\otimes 1) e\,dl=e_\mu\otimes e_\nu.
\end{equation}
In particular, $e\neq 0$ and we can identify $e$ as a multiple of
the unit vector $e_{\mu+\nu}$.
The desired relation (\ref{tensorproduct}) follows from
(\ref{emuplusnu}), by
using that Haar measure on $\tilde K$ is invariant
under the adjoint action of $K^*$.
\end{proof}

For $\mu\in \LK=\Lambda^+(U/K)$
the  \textit{spherical function} associated
with $\mu$ is the matrix coefficient
$$\psi_\mu(x)=(\pi_\mu(x)e_\mu,e_\mu), \quad x\in U\, .$$
It is left and right $K$-invariant and can therefore be viewed as
a left $K$-invariant function on $M$. It is an eigenfunction
of $\mathbb{D}(M)$, the algebra of invariant differential
operators on $M$. The
{\it spherical Fourier transform} of a
$K$-invariant $L^1$-function $f$ on
$M$ is the function $\tilde f: \gL^+(M)\to \C$ defined by
$$\tilde f(\mu)=\int_M f(x)\overline{\psi_\mu(x)}\,dx =(f,\psi_\mu),$$
where $dx$ is the normalized invariant measure on $M$ (that is,
$\int_M \, dx=1$). Notice that if $f\in L^p(M)$ then $|\widetilde{f}(\mu )|
\le \|f\|_p$ as $|\psi_\mu(x)|\le 1$. In particular
$$|\tilde f(\mu)|\leq \|f\|_\infty$$
if $f$ is continuous, and hence bounded.
It follows from the Schur orthogonality relations that
\begin{equation}\label{Schur}
\widetilde\psi_\nu(\mu)=\delta_{\nu,\mu} d(\mu)^{-1}
\end{equation}
for $\nu,\mu\in\Lambda^+(M)$, where
$d(\mu)=\dim(V_\mu)$.

The {\it spherical Fourier series}
for $f$ is the series given by
\begin{equation}\label{Fourier series}
\sum_{\mu\in\gL^+(M)} d(\mu)\tilde f(\mu)\psi_\mu\, .
\end{equation}

Denote by $\Delta_M$ the negative definite Laplace operator on $M$.
Then
\begin{equation}\label{eq-eigenvalue}
\Delta_M\psi_\mu =- \langle \mu , \mu +2\rho \rangle \, \psi_\mu \, .
\end{equation}
Based on (\ref{eq-eigenvalue}) it can be shown,
see Sugiura \cite{Sugiura}, that
$f$ is smooth if and only if the Fourier
transform $\tilde f$ is {\it rapidly decreasing}, that is,
for each $k\in\Z^+$ there exists a constant $C_k$ such
that
\begin{equation}\label{eq-growthFTfunctions}
|\tilde f(\mu)|\leq C_k (1+|\mu|)^{-k}
\end{equation}
for all $\mu\in\gL^+(M)$. In this case
the Fourier series
(\ref{Fourier series}) converges pointwise and absolutely
to $f$.

There are different ways to describe the topology on
$C^\infty (M)^K$. First,  the topology on
$C^\infty (U)$ is defined by the seminorms
\begin{equation}\label{eq-semiNormNu}
\nu_p(f):=\| L_p f\|_\infty
\end{equation}
where $p\in  U(\fg )$. Here $L$ denotes the left regular
representation and $U(\fg )$ the universal
enveloping algebra of $\fg$.
If $\mathcal{C}$ is a
closed subspace of $C^\infty (U)$ then the topology on $\mathcal{C}$ is
given by the same family of seminorms. This applies to the
space $C^\infty (M)$, viewed as the space of \textit{right} $K$-invariant
smooth functions on $U$, the space $C^\infty (M)^K$ of \textit{left} $K$-invariant
functions in $C^\infty (M)$, as well as the spaces $C^\infty_r(M)$ and
$C^\infty_r(M)^K$, $r>0$, where the subscript $r$ indicates that the support
is contained in $\bar D_r (o)$. Note, if $r$ is big enough then $C^\infty_r(M)=C^\infty (M)$.

According to \cite{Sugiura} the topology can also be described
using $\Delta_M$:
\begin{lemma}\label{sugiura1} The topology of $C^\infty (M)$,
$C^\infty (M)^K$, and $C_r^\infty (M)^K$ is given by the seminorms
$$\tau_m (f)=\|\Delta_M^m f\|_\infty\, ,\quad m\in\Z^+\, .$$
\end{lemma}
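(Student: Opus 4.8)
The plan is to show that the two families of seminorms, $\{\nu_p\mid p\in U(\fg)\}$ and $\{\tau_m\mid m\in\Z^+\}$, generate the same locally convex topology. By the discussion preceding the statement, each of $C^\infty(M)$, $C^\infty(M)^K$, $C^\infty_r(M)^K$ is a closed subspace of $C^\infty(U)$ on which both families act by restriction, and the restriction to a subspace of a family that generates the ambient topology again generates the subspace topology. Hence it suffices to prove the equivalence of the two topologies on the single space $C^\infty(M)=C^\infty(U)^K$ of right-$K$-invariant smooth functions on $U$; the three assertions of the lemma then follow simultaneously.

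One direction is formal. By the normalization of the Riemannian metric fixed in Section~\ref{s: notation}, the operator $\Delta_M$ is the image in $\mathbb{D}(M)$ of the Casimir element $\Omega\in U(\fg)$; equivalently, $\Delta_M f=L_\Omega f$ when $f$ is viewed as a right-$K$-invariant function on $U$ (this is classical, cf.\ \cite{GGA}). Since $\Omega$ is central, $\tau_m(f)=\|\Delta_M^m f\|_\infty=\|L_{\Omega^m}f\|_\infty=\nu_{\Omega^m}(f)$, so every $\tau_m$ is literally one of the seminorms $\nu_p$ and is in particular continuous for the $\nu$-topology.

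For the converse I would bound an arbitrary $\nu_p$ by finitely many of the $\tau_m$, using elliptic regularity on the compact manifold $U$. Fix $p$ of order $k$. Because left-invariant vector fields trivialize $TU$, there is a constant $C_p$ with $\nu_p(f)=\|L_p f\|_\infty\le C_p\|f\|_{C^k(U)}$ for all $f\in C^\infty(U)$. Now $\Omega$ is, up to sign, the Laplace--Beltrami operator of the bi-invariant metric defined by $-B$ on $U$, hence a second-order elliptic operator; iterating the $L^2$ elliptic estimate for $\Omega$ and combining it with the Sobolev embedding $H^s(U)\hookrightarrow C^k(U)$, valid for $s>k+\tfrac12\dim U$, one obtains $m\in\Z^+$ with $2m\ge s$ and $C>0$ such that
\[
\|f\|_{C^k(U)}\le C\sum_{j=0}^m\|\Omega^j f\|_{L^2(U)}\qquad(f\in C^\infty(U)).
\]
Restricting to $f\in C^\infty(U)^K$ turns $\Omega^j f$ into $\Delta_M^j f$, and since Haar measure on $U$ is normalized, $\|\,\cdot\,\|_{L^2(U)}\le\|\,\cdot\,\|_\infty$; hence $\nu_p(f)\le C_pC\sum_{j=0}^m\tau_j(f)$. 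Thus every $\nu_p$ is continuous for the $\tau$-topology, and with the previous paragraph the two topologies on $C^\infty(M)$ coincide.

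The only genuine inputs are the two structural facts quoted above — that $\Delta_M$ is induced by the Casimir $\Omega$ (this is where the metric normalization is used) and that $\Omega$ is elliptic on $U$ — after which the argument is the standard interplay, on a compact manifold, between powers of an elliptic operator and the Sobolev scale; I anticipate no real obstacle. The point that needs attention is that the elliptic estimate must be carried out for $\Omega$ on all of $U$, where it is genuinely elliptic precisely because it mixes the $\fk$- and $\fq$-directions; ellipticity of $\Delta_M$ on $M$ alone would not control the full $C^k(U)$-norm that dominates $\nu_p$. (One could instead argue more concretely for $f\in C^\infty(M)^K$ via the spherical Fourier series of Section~\ref{s: Fourier} and the Weyl dimension formula, estimating $\|f\|_\infty\le\sum_\mu d(\mu)|\tilde f(\mu)|$, but this is less uniform and does not directly reach the seminorms $\nu_p$, so I would keep the elliptic argument.)
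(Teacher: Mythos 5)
The paper offers no argument here at all---it just cites the corollary to Theorem~4 of Sugiura's paper. So yours is a genuine proof where the paper gives a pointer, and the route is different: Sugiura proves this via Fourier series on compact Lie groups (estimating Fourier coefficients against powers of the Casimir eigenvalue, essentially the circle of ideas recalled around (\ref{eq-eigenvalue})--(\ref{eq-growthFTfunctions}) in the paper), whereas you argue through elliptic regularity of $\Omega$ on $U$ together with the Sobolev embedding. Both hinge on the same structural facts---that $\Delta_M$ is induced by the Casimir and that the Casimir is elliptic as a bi-invariant operator on $U$---and both are correct. What the elliptic-regularity route buys is self-containment and independence from the representation-theoretic estimates (e.g.\ Lemma~\ref{le-su}); what the Fourier-series route buys is internal coherence with the rest of the paper and the estimates it already sets up. Your reduction to the single space $C^\infty(U)^K$ is legitimate, since all three spaces carry the subspace topology from $C^\infty(U)$ by the same family $\{\nu_p\}$; and your caveat that one must use ellipticity of $\Omega$ on all of $U$ (not just of $\Delta_M$ on $M$) to dominate $\|f\|_{C^k(U)}$, and hence the seminorms $\nu_p$, is exactly the right point to flag.
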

\begin{proof} This is the corollary to Theorem 4 in \cite{Sugiura}.
\end{proof}

We shall also need the following fact from \cite{Wallach} Lemma 5.6.7
or \cite{Sugiura} Lemma 1.3.

\begin{lemma}\label{le-su}  There exists $t_0\in\real$ such that
$\sum_{\mu\in\Lambda^+(M)}(1+|\mu|)^{-t}<\infty$
if $t>t_0$.
\end{lemma}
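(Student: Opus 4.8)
The plan is to bound the series by a crude count of its terms. By Theorem~\ref{t: Helgason classification} we have $\Lambda^+(M)\subseteq\Lambda^+(\tilde U/\tilde K)$, and, as recorded in the proof of Lemma~\ref{semilattice}, $\Lambda^+(\tilde U/\tilde K)$ consists exactly of the nonnegative integer combinations $c_1\omega_1+\dots+c_n\omega_n$ of the fundamental weights $\omega_1,\dots,\omega_n$ of the restricted root system, where $n=\dim\fa$ and $\omega_1,\dots,\omega_n$ form a basis of $i\fa^*$. Hence it suffices to prove that $\sum_{\mu\in\Lambda^+(\tilde U/\tilde K)}(1+|\mu|)^{-t}$ is finite for $t$ large.

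The main step is to count, for $N\ge1$, the number of $\mu\in\Lambda^+(\tilde U/\tilde K)$ with $|\mu|\le N$. Since $\omega_1,\dots,\omega_n$ is a basis of the $n$-dimensional real vector space $i\fa^*$, equivalence of norms supplies a constant $\delta>0$ with $|c_1\omega_1+\dots+c_n\omega_n|\ge\delta\|c\|$ for all real $c_1,\dots,c_n$, where $\|c\|$ denotes the Euclidean norm of $c=(c_1,\dots,c_n)$. Writing each $\mu\in\Lambda^+(\tilde U/\tilde K)$ as $\sum_i c_i\omega_i$ with $c\in(\Z^+)^n$, the bound $|\mu|\le N$ forces $\|c\|\le N/\delta$, hence each $c_i$ into a set of at most $1+2N/\delta$ integers. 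Therefore
\begin{align*}
\#\{\mu\in\Lambda^+(\tilde U/\tilde K):\ |\mu|\le N\}
&\le\#\{c\in(\Z^+)^n:\ \|c\|\le N/\delta\}\\
&\le(1+2N/\delta)^n\le C(1+N)^n
\end{align*}
for a constant $C$ depending only on $n$ and $\delta$.

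Finally, I would decompose the series into spherical shells $S_N=\{\mu\in\Lambda^+(\tilde U/\tilde K):\ N-1\le|\mu|<N\}$, $N\ge1$. Every $\mu$ lies in exactly one $S_N$; on $S_N$ we have $1+|\mu|\ge N$, and $\#S_N\le C(1+N)^n$ by the previous step. Hence
$$\sum_{\mu\in\Lambda^+(\tilde U/\tilde K)}(1+|\mu|)^{-t}\le\sum_{N=1}^\infty\#S_N\cdot N^{-t}\le C\sum_{N=1}^\infty(1+N)^n N^{-t}\le 2^nC\sum_{N=1}^\infty N^{n-t},$$
which converges as soon as $t>n+1$; so the lemma holds with $t_0=n+1$. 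I do not expect any genuine obstacle: the only inputs are the structure of $\Lambda^+(\tilde U/\tilde K)$ recalled above and equivalence of norms in finite dimensions, and the precise value of $t_0$ is immaterial for the later use of the lemma. (Replacing the shell estimate by a direct comparison of $\sum_{c\in(\Z^+)^n}(1+\delta\|c\|)^{-t}$ with the integral $\int_{(\R^+)^n}(1+\delta\|x\|)^{-t}\,dx$ would even give $t_0=n$.)
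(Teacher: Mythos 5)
The paper does not prove this lemma; it simply cites Wallach (Lemma 5.6.7) or Sugiura (Lemma 1.3). Your argument, by contrast, is a direct and self-contained proof, and it is correct. The two facts you rely on are already in the paper: $\Lambda^+(M)\subseteq\Lambda^+(\tilde U/\tilde K)$ by definition, and (from the proof of Lemma~\ref{semilattice}) $\Lambda^+(\tilde U/\tilde K)=\Z^+\omega_1+\dots+\Z^+\omega_n$ with $\omega_1,\dots,\omega_n$ a basis of $i\fa^*$, $n=\dim\fa$. Equivalence of norms then gives the polynomial bound $\#\{\mu\in\Lambda^+(\tilde U/\tilde K):|\mu|\le N\}\le C(1+N)^n$, and the shell decomposition and comparison with $\sum_N N^{n-t}$ gives convergence for $t>n+1$ (and $t>n$ by the integral-comparison refinement you note). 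This is in substance the standard lattice-point count that the cited references also use, so you are not doing anything that would surprise the authors; but your version has the modest advantage of being self-contained and of exhibiting the explicit threshold $t_0=n$, which the lemma as stated leaves unspecified. One cosmetic point: the inclusion $\Lambda^+(M)\subseteq\Lambda^+(\tilde U/\tilde K)$ is by the paper's definition of $\Lambda^+(U/K)$ rather than by Theorem~\ref{t: Helgason classification} itself, and the per-coordinate bound $c_i\le N/\delta$ (since $c_i\ge0$) already gives $(1+N/\delta)^n$, so the factor $2$ in your $(1+2N/\delta)^n$ is unnecessary, though harmless.
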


By Weyl's dimension formula, the map
$\mu \mapsto
d(\mu)$ extends to a polynomial function on $\fa_\C^*$.
We derive the following
consequence from Lemmas
\ref{sugiura1} and \ref{le-su}
together with
(\ref{eq-eigenvalue}) and
(\ref{eq-growthFTfunctions}).

\begin{lemma}\label{Fourier convergence} Let $f\in C^\infty(M)$.
Then the Fourier series
{\rm (\ref{Fourier series})}
converges to $f$ in $C^\infty(M)$.
\end{lemma}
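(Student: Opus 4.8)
The plan is to combine three ingredients that are already in place: the characterization (\ref{eq-growthFTfunctions}) of the Fourier transforms of smooth functions as rapidly decreasing sequences, the description of the topology of $C^\infty(M)$ by the seminorms $\tau_m(f)=\|\Delta_M^m f\|_\infty$ from Lemma \ref{sugiura1}, and the summability estimate of Lemma \ref{le-su}. First I would recall that the partial sums $S_F=\sum_{\mu\in F} d(\mu)\tilde f(\mu)\psi_\mu$, taken over finite subsets $F\subset\Lambda^+(M)$, converge pointwise and absolutely to $f$ by (\ref{eq-growthFTfunctions}) and the preceding discussion; the point is to upgrade this to convergence in the Fréchet topology.

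The key step is to apply $\Delta_M^m$ term by term. Using the eigenvalue relation (\ref{eq-eigenvalue}), we have $\Delta_M^m\psi_\mu=(-1)^m\langle\mu,\mu+2\rho\rangle^m\psi_\mu$, so for any finite $F$,
$$\Delta_M^m(S_F - f) = (-1)^{m+1}\sum_{\mu\notin F} d(\mu)\,\langle\mu,\mu+2\rho\rangle^m\,\tilde f(\mu)\,\psi_\mu\, ,$$
at least formally; one must first check that this series converges uniformly, which is exactly what the following estimate gives. Since $|\psi_\mu(x)|\le 1$, since $d(\mu)$ is dominated by a polynomial in $|\mu|$ (Weyl's dimension formula), since $|\langle\mu,\mu+2\rho\rangle|^m$ is likewise of polynomial growth in $|\mu|$ of degree $2m$, and since $\tilde f$ is rapidly decreasing, we get for every $m$ and every $k$ a constant so that the $\mu$-term is bounded by $C_{m,k}(1+|\mu|)^{-k}$. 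Choosing $k>t_0 + (\text{the combined polynomial degree})$ and invoking Lemma \ref{le-su}, the tail $\sum_{\mu\notin F}$ is finite and tends to $0$ as $F$ exhausts $\Lambda^+(M)$. Hence $\tau_m(S_F-f)=\|\Delta_M^m(S_F-f)\|_\infty\to 0$ for every $m$, which by Lemma \ref{sugiura1} is precisely convergence in $C^\infty(M)$.

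The only genuinely delicate point is the term-by-term differentiation: a priori $\Delta_M^m$ is only guaranteed to act continuously, not to commute with an infinite sum. I would handle this in the standard way — first establish by the estimate above that $\sum_\mu d(\mu)\langle\mu,\mu+2\rho\rangle^m\tilde f(\mu)\psi_\mu$ converges uniformly to some continuous function $g_m$, then note that $\Delta_M$ (being a differential operator, hence continuous from $C^\infty$ to $C^\infty$, and in particular closed as an operator on continuous functions in the appropriate sense) forces $g_m=(-1)^m\Delta_M^m f$; alternatively, one observes that the Fourier coefficients of $\Delta_M^m f$ are $(-1)^m\langle\mu,\mu+2\rho\rangle^m\tilde f(\mu)$ and applies the already-known pointwise Fourier inversion to the smooth function $\Delta_M^m f$. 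Either route is routine once the uniform convergence is in hand, so the proof reduces to the polynomial-growth-versus-rapid-decay bookkeeping described above.
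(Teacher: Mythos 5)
Your proof is correct and follows exactly the route the paper indicates: the paper states Lemma \ref{Fourier convergence} as a direct consequence of Lemmas \ref{sugiura1} and \ref{le-su} together with (\ref{eq-eigenvalue}) and (\ref{eq-growthFTfunctions}), leaving the bookkeeping to the reader, and your argument supplies precisely that bookkeeping (including the justification of term-by-term differentiation via Fourier inversion applied to $\Delta_M^m f$).
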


It follows from the $KAK$-decomposition of $U$ that
the restriction map $f\mapsto f|_{A\cdot o}$ is injective
for $f\in C^\infty(M)^K$. We use the topology on $C^\infty (A\cdot o)$
given by the seminorms $\nu_u(f)=\|L_uf\|_\infty$, $u\in U(\fa )$.
Then $C^\infty (A\cdot o)$ is a Fr\'echet space, and
$C^\infty (A\cdot o)^W$ is a closed subspace whose topology
is given by the same family of seminorms.
The following lemma gives a different way to describe the topology on
$C^\infty (M)^K$:

\begin{lemma}\label{le-restriction} The restriction map from
$C^\infty (M)^K$ to $C^\infty(A\cdot o)^W$ is
a topological isomorphism. It
is also a topological isomorphism from
$C^\infty_r (M)^K$ onto $C^\infty_r(A\cdot o)^W$,
for each $r>0$.
\end{lemma}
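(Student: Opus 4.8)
The plan is to prove Lemma \ref{le-restriction} by first establishing the result without the support condition, and then deducing the version with support in $\bar D_r(o)$. The key underlying fact is the $KAK$-decomposition, which says that every element of $M$ is of the form $k\cdot a\cdot o$ with $k\in K$ and $a\in A$, so that a $K$-invariant function on $M$ is completely determined by its restriction to $A\cdot o$, and conversely the restriction to $A\cdot o$ of such a function is automatically $W$-invariant (since $W$ is realized by elements of $K$ normalizing $A$). Thus the restriction map $f\mapsto f|_{A\cdot o}$ is a well-defined linear bijection from $C^\infty(M)^K$ onto $C^\infty(A\cdot o)^W$; injectivity was already noted in the text preceding the lemma, and surjectivity is the standard fact that a $W$-invariant smooth function on $A\cdot o$ extends to a $K$-invariant smooth function on $M$ (this is essentially Helgason's result on radial parts / the smoothness of the averaging map, or can be seen via the theory of Chevalley-type extensions of Weyl-group-invariant functions). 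It is on the open ball, where $\Exp$ is a diffeomorphism, that one has the cleanest argument, but since $\Exp$ is surjective onto $M$ the global statement follows too.

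Next I would address continuity in both directions. One direction is easy: restriction to the closed submanifold $A\cdot o$ is continuous, because if $f_j\to f$ in $C^\infty(M)^K$ then all derivatives converge uniformly on $M$, hence in particular $\|L_u(f_j|_{A\cdot o} - f|_{A\cdot o})\|_\infty\to 0$ for every $u\in U(\fa)$, as differentiation along $\fa$ of $f|_{A\cdot o}$ is the restriction of differentiation along $\fq$ of $f$. For the reverse direction — continuity of the extension map from $C^\infty(A\cdot o)^W$ back to $C^\infty(M)^K$ — I would invoke the open mapping / closed graph theorem: both spaces are Fr\'echet spaces (the target because it is a closed subspace of $C^\infty(U)$, the source as stated in the text), the map is a continuous linear bijection, hence its inverse is automatically continuous. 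This sidesteps the need to produce explicit seminorm estimates for the extension, which would otherwise require controlling derivatives transverse to $A\cdot o$ in terms of derivatives along it and is genuinely delicate near the walls of the Weyl chamber.

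Finally, for the statement with support condition, I would note that $\Exp$ maps $B_r(0)\subset\fq$ diffeomorphically onto $D_r(o)$ for small $r$, but more robustly: since $\Exp$ is surjective and distance-decreasing in the appropriate sense, a $K$-invariant $f$ is supported in $\bar D_r(o)$ if and only if its restriction to $A\cdot o$ is supported in $\overline{A_r\cdot o}$ where $A_r\cdot o = D_r(o)\cap (A\cdot o)$; indeed $k\cdot a\cdot o$ has distance to $o$ equal to the distance from $a\cdot o$ to $o$ because $K$ acts by isometries fixing $o$. Hence $f\in C^\infty_r(M)^K$ precisely when $f|_{A\cdot o}\in C^\infty_r(A\cdot o)^W$, so the restriction map carries the subspace $C^\infty_r(M)^K$ bijectively onto $C^\infty_r(A\cdot o)^W$; since both are closed subspaces of the larger Fr\'echet spaces and carry the subspace topology, the already-established topological isomorphism restricts to a topological isomorphism between them.

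The main obstacle I anticipate is the surjectivity of the restriction map together with the continuity of its inverse — that is, showing a $W$-invariant smooth function on $A\cdot o$ genuinely extends to a \emph{smooth} $K$-invariant function on all of $M$, with the inverse map continuous. The continuity issue I would finesse via the open mapping theorem as above, but the smoothness of the extension across the singular orbits (where $\dim K\cdot(a\cdot o)$ drops) is the substantive point; this is a known result (Helgason, \emph{Groups and Geometric Analysis}) and I would cite it rather than reprove it, since the paper is clearly in a context where such facts are standard background.
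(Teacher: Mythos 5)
Your proposal matches the paper's proof in structure: bijectivity of the restriction map is cited as a known result, continuity of restriction is elementary, the inverse is continuous by the open mapping theorem for Fr\'echet spaces, and the support statement follows because $K$ acts by isometries fixing $o$, so $\bar D_r(o)=K(\bar D_r(o)\cap A\cdot o)$. The only (inessential) difference is the reference for the $C^\infty$ Chevalley-type surjectivity: the paper cites Dadok (\emph{On the $C^\infty$ Chevalley's theorem}, Adv.\ Math.\ 44 (1982), Theorem 1.7) rather than Helgason.
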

\begin{proof}
According to \cite{Dadok2}, Theorem 1.7, the restriction map is
bijective. It is obviously continuous. By
the open mapping
theorem for Fr\'echet spaces \cite{Treves}, Theorem 17.1., p. 170, it follows that
the restriction map is a topological isomorphism. For the last statement
we note first that $C_r^\infty (A\cdot o)^W$ is
closed in $C^\infty (A\cdot o)^W$ and similarly for
$C^\infty_r(M)^K$ in $ C^\infty (M)^K$.
Furthermore, the metric ball in $A\cdot o$ of radius $r$
centered at $o$ is $\bar D_r (o)\cap A\cdot o$  and
$\bar D_r (o)=K (\bar D_r (o)\cap A\cdot o)$. Hence
$f\mapsto f|_{A\cdot o}$ is a bijection
$C_r^\infty (M)^K\to C_r^\infty (A\cdot o)^W$,
and it follows from the first statement that it is an isomorphism.
\end{proof}

\section{The Fourier series of a distribution}\label{s: dist}
\noindent
The continuous dual of $C^\infty (M)$, denoted by
$C^{-\infty} (M)$, is the space of \textit{distributions} on $M$.
Recall that
$U$ acts on $C^{-\infty}(M)$ by
$$L_gF (f) =F (L_{g^{-1}}f )\,   , \quad g\in U,\,\,
f\in C^\infty (M)\, , \, \, \mathrm{and}\,\, F\in C^{-\infty}(M)\, .$$

Denote by $C^{-\infty}(M)^K$ the space of $K$-invariant distribution
on $M$. Since $C^\infty(M)^K$ is a closed subspace of $C^\infty(M)$,
we obtain a map from $C^{-\infty}(M)$ to $[C^{\infty }(M)^K]^*$,
by taking restrictions of linear forms to this subspace.
Here $[C^{\infty }(M)^K]^*$ denotes
the space of continuous linear forms on $C^{\infty }(M)^K$.
We provide $C^{-\infty}(M)$ and $[C^\infty (M)^K]^*$ with the
weak $*$-topology. The inclusion map $C^\infty (M)^K\hookrightarrow C^\infty (M)$ is
continuous. Hence the above projection is also continuous.

\begin{lemma}\label{le-distributionrestriction} The restriction
defines a linear isomorphism
$$C^{-\infty} (M)^K\simeq [C^{\infty }(M)^K]^*.$$
\end{lemma}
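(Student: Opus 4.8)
The plan is to exhibit an explicit two‑sided inverse to the restriction map, constructed by averaging over the compact group $K$. For $f\in C^\infty(M)$ set
$$Pf=\int_K L_kf\,dk,$$
the integral being a vector‑valued (Riemann) integral in the Fréchet space $C^\infty(M)$; it exists because $k\mapsto L_kf$ is continuous from the compact group $K$ into $C^\infty(M)$ and $C^\infty(M)$ is complete. The first step is to verify that $P$ is a continuous projection of $C^\infty(M)$ onto the closed subspace $C^\infty(M)^K$. That $Pf$ is left $K$‑invariant, and that $Pf=f$ when $f$ is already in $C^\infty(M)^K$, follow from the invariance of Haar measure on $K$. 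Continuity with respect to the seminorms $\nu_p$, $p\in U(\fg)$, follows from the identity $L_pL_kf=L_kL_{\mathrm{Ad}(k^{-1})p}f$ together with the fact that $\{\mathrm{Ad}(k^{-1})p:k\in K\}$ spans a finite‑dimensional subspace of $U(\fg)$: writing $\mathrm{Ad}(k^{-1})p=\sum_i c_i(k)p_i$ with bounded coefficient functions $c_i$ and finitely many $p_i$, one gets $\nu_p(Pf)\le\int_K\nu_p(L_kf)\,dk\le C\max_i\nu_{p_i}(f)$.

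Next, for $\ell\in[C^\infty(M)^K]^*$ the composition $\ell\circ P$ is a continuous linear functional on $C^\infty(M)$, hence lies in $C^{-\infty}(M)$; and it is $K$‑invariant because $P\circ L_k=P$ for every $k\in K$ (again by invariance of Haar measure), so $\ell\circ P\in C^{-\infty}(M)^K$. Since $P$ restricts to the identity on $C^\infty(M)^K$, the restriction of $\ell\circ P$ to $C^\infty(M)^K$ equals $\ell$. Thus $\ell\mapsto\ell\circ P$ is a right inverse to the restriction map $r\colon C^{-\infty}(M)^K\to[C^\infty(M)^K]^*$.

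It then remains to show that $r$ is injective, i.e. that for $F\in C^{-\infty}(M)^K$ one has $F(Pf)=F(f)$ for all $f\in C^\infty(M)$, so that $(F|_{C^\infty(M)^K})\circ P=F$. Here one uses that a continuous linear functional on the Fréchet space $C^\infty(M)$ commutes with the vector‑valued integral defining $Pf$:
$$F(Pf)=\int_K F(L_kf)\,dk=\int_K (L_{k^{-1}}F)(f)\,dk=\int_K F(f)\,dk=F(f),$$
the second equality being the definition of the $U$‑action on distributions and the last one the $K$‑invariance of $F$. Linearity of $r$ is immediate, so this proves that $r$ is a linear isomorphism $C^{-\infty}(M)^K\simeq[C^\infty(M)^K]^*$.

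The only points that require genuine care are the continuity of $P$ in the $U(\fg)$‑seminorms and the legitimacy of interchanging $F$ with $\int_K\,dk$; both are standard consequences of the compactness of $K$ and the completeness of $C^\infty(M)$, and the remainder of the argument is formal. I expect the mild bookkeeping around the $\mathrm{Ad}(K)$‑span of a fixed $p\in U(\fg)$ to be the most technical ingredient, but it is routine since $K$ is compact.
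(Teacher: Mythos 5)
Your proof is correct and follows essentially the same route as the paper's: the paper also constructs the averaging projection $\pr(f)(x)=\int_K f(kx)\,dk$ onto $C^\infty(M)^K$, observes that it is continuous, and takes its transpose as the inverse to restriction. You simply fill in the details (continuity via $\mathrm{Ad}(K)$-invariance of the filtration of $U(\fg)$, and the interchange of $F$ with the vector-valued integral) that the paper dismisses as ``easily seen.''
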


\begin{proof} Let
$\pr : C^\infty (M)\to C^\infty (M)^K$ be the projection
$\pr (f)(x)=\int_K f(kx)\, dk$. It is continuous, hence
the transposed $\pr^t$ maps
$[C^\infty ( M)^K]^*\to C^{-\infty }(M)$ is also continuous.
It  is easily seen that this provides the inverse to the restriction.
\end{proof}

\begin{lemma}\label{le-KinvariantDistribution} Let $F: C^\infty (M)^K\to \C$ be
linear. Then the following statements are equivalent:
\begin{enumerate}
\item $F$ is a $K$-invariant
distribution.
\item There exist $C>0$ and $m\in \Z^+$ such
that
\begin{equation}\label{eq-dist1}
|F(f)|\le C \max_{j=1,\ldots ,m} \|\Delta_M^j f\|_\infty  \qquad
(\forall f\in C^\infty(M)^K).
\end{equation}
\item
There exist $C> 0$
and finitely many $u_1,\ldots ,u_s \in U(\fa)$ such that
\begin{equation}\label{eq-dist}
|F(f)|\le C \max_{j=1,\ldots ,s} \|L_{u_j}(f|_{A\cdot o})\|_\infty   \qquad
(\forall f\in C^\infty(M)^K).
\end{equation}
\end{enumerate}
\end{lemma}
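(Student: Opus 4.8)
The plan is to establish the cycle of implications $(1)\Rightarrow(2)\Rightarrow(3)\Rightarrow(1)$, using the topological descriptions of $C^\infty(M)^K$ furnished by Lemmas \ref{sugiura1} and \ref{le-restriction}. The implication $(1)\Rightarrow(2)$ is almost immediate: if $F\in C^{-\infty}(M)^K$, then by Lemma \ref{le-distributionrestriction} it restricts to a continuous linear form on $C^\infty(M)^K$, and by Lemma \ref{sugiura1} the topology on $C^\infty(M)^K$ is given by the seminorms $\tau_m(f)=\|\Delta_M^m f\|_\infty$. Continuity of a linear form on a space with a countable, increasing (or at least directed) family of seminorms means it is bounded by a constant times one of them, hence by $C\max_{j\le m}\|\Delta_M^j f\|_\infty$ for suitable $C$ and $m$; note that $\|\Delta_M^0 f\|_\infty=\|f\|_\infty$ may be absorbed, and since $\tau_0(f)\le\tau_1(f)^{1/2}$-type comparisons are not needed, we simply keep the max over $j=1,\dots,m$ (if $m=0$ is forced, replace by $m=1$, which only weakens the bound).

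For $(2)\Rightarrow(3)$, I would invoke Lemma \ref{le-restriction}: the restriction map $f\mapsto f|_{A\cdot o}$ is a topological isomorphism $C^\infty(M)^K\to C^\infty(A\cdot o)^W$, where the target carries the topology of the seminorms $\nu_u(g)=\|L_u g\|_\infty$, $u\in U(\fa)$. The operator $\Delta_M$, restricted to $K$-invariant functions and transported to $A\cdot o$, becomes the radial part of the Laplacian, a $W$-invariant differential operator on $A\cdot o$ with (real-)analytic coefficients (the classical formula expressing it in terms of $\partial/\partial a_i$ with coefficients built from the roots). Hence each $\Delta_M^j f$ restricted to $A\cdot o$ is a finite sum $\sum_u c_u(a)\,L_u(f|_{A\cdot o})(a)$ with $u\in U(\fa)$ of order $\le 2j$ and $c_u$ smooth; taking sup-norms and using that the $c_u$ are bounded on the relevant domain (or, if $A\cdot o$ is non-compact in the covering, on any region where we work — but $A\cdot o$ is compact here since $M$ is compact), we get $\|\Delta_M^j f\|_\infty\le C'\max_u\|L_u(f|_{A\cdot o})\|_\infty$ over finitely many $u$. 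Substituting into \eqref{eq-dist1} yields \eqref{eq-dist}.

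For $(3)\Rightarrow(1)$, the estimate \eqref{eq-dist} says exactly that $F$, viewed through the isomorphism of Lemma \ref{le-restriction}, is bounded by a continuous seminorm on $C^\infty(A\cdot o)^W$, hence is continuous there; pulling back, $F$ is a continuous linear form on $C^\infty(M)^K$, and by Lemma \ref{le-distributionrestriction} it extends (via $\pr^t$) to a $K$-invariant distribution on $M$. The main obstacle — really the only non-formal point — is the second step: one must know that the $K$-radial part of $\Delta_M$ is a differential operator on $A\cdot o$ with coefficients that are \emph{bounded} (so that sup-norm estimates pass through cleanly). This is standard (the coefficients are smooth on all of the compact homogeneous space $A\cdot o$, being built from $\coth$/$\cot$-type expressions in the roots that are in fact regular here because we are on the compact dual and on a small enough ball, or simply because on the compact torus orbit the relevant functions are smooth and periodic), but it is the place where one actually uses the structure theory rather than soft functional analysis. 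One should also take a little care that $U(\fa)$ and "finitely many $u_j$" match up: the order of the $u_j$ needed grows with $m$, but $m$ is fixed once and for all by step $(1)\Rightarrow(2)$, so only finitely many $u_j$ occur.
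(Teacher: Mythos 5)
Your overall scheme is sound and matches the paper's intention: both (2) and (3) characterize continuity of $F$ on the Fr\'echet space $C^\infty(M)^K$ with respect to two different generating families of seminorms, those of Lemma \ref{sugiura1} and those pulled back through the isomorphism of Lemma \ref{le-restriction}, and (1) is the same thing by Lemma \ref{le-distributionrestriction}. The equivalence is therefore a purely soft consequence of the two lemmas, which is exactly how the paper treats it.

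The problem is in your attempted \emph{direct} argument for $(2)\Rightarrow(3)$. You claim that the radial part of $\Delta_M$ on $A\cdot o$ is a $W$-invariant differential operator with coefficients in $U(\fa)$ that are smooth and bounded, ``built from $\cot$-type expressions \ldots that are in fact regular here \ldots or simply because on the compact torus orbit the relevant functions are smooth and periodic.'' This is not true. The radial part of $\Delta_M$ on the torus $A\cdot o$ has first-order terms with coefficients of the form $\sum_\alpha m_\alpha \cot(\alpha(\cdot))\,\partial_{H_\alpha}$ (up to normalization), and these coefficients blow up on the union of the walls (where some root is an integer multiple of $\pi i$). Periodicity does not give continuity; $\cot$ is periodic and unbounded. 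So one cannot write $\Delta_M^j f|_{A\cdot o}$ as a finite sum $\sum_u c_u L_u(f|_{A\cdot o})$ with bounded $c_u$ and then take sup-norms. The estimate $\|\Delta_M^j f\|_\infty\le C\max_u\|L_u(f|_{A\cdot o})\|_\infty$ is indeed true for $K$-invariant $f$, but it is exactly one half of the content of Lemma \ref{le-restriction} (the continuity of the inverse of the restriction map, proved there by the open mapping theorem via Dadok's smooth Chevalley theorem), not something you can re-derive by a naive pointwise bound. If you replace your differential-operator computation by the abstract topological argument — the two seminorm families generate the same topology on $C^\infty(M)^K$, hence a linear functional bounded by one family is bounded by the other — the proof is correct and coincides with the paper's.
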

\begin{proof} This follows from Lemmas \ref{sugiura1} and
\ref{le-restriction}.
\end{proof}

Let $w^*\in W$ be such that $w^*(\Sigma^+)=-\Sigma^+$. Then
$\mu \mapsto \mu^*:=-w^*(\mu)$ defines a bijection of $\Lambda^+(M)$,
such that $\pi_{\mu^*}$ is the contragradient representation to $\pi_\mu$.
Notice that
$\overline{\psi_\mu}=\psi_{\mu^*}=\psi_\mu^\vee$ where $f^\vee
(g)=f(g^{-1})$. Furthermore
$d(\mu^*)=d(\mu)$.
We define the Fourier transform of a
spherical distribution
$F\in C^{-\infty} (M)^K$  by
\begin{equation}\label{eq-FTdistribution}
\tilde{F}(\mu):=F(\psi_{\mu^*})=F(\psi_\mu^\vee )\, .
\end{equation}
In particular, for smooth $K$-invariant functions regarded
as distributions by means of the pairing with the
invariant measure, the two notions of Fourier transform agree.

\begin{lemma} Let $F\in C^{-\infty} (M)^K$.
Then $\mu \mapsto \widetilde{F}(\mu)$ has at most
polynomial growth.
\end{lemma}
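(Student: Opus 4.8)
The plan is to use the characterization in Lemma \ref{le-KinvariantDistribution}(2): since $F\in C^{-\infty}(M)^K$, there exist $C>0$ and $m\in\Z^+$ such that $|F(f)|\le C\max_{j=1,\dots,m}\|\Delta_M^j f\|_\infty$ for all $f\in C^\infty(M)^K$. Applying this to $f=\psi_{\mu^*}$ and using the eigenvalue relation \eqref{eq-eigenvalue}, namely $\Delta_M\psi_{\mu^*}=-\langle\mu^*,\mu^*+2\rho\rangle\psi_{\mu^*}$, together with $|\psi_{\mu^*}(x)|\le 1$, we get
$$|\tilde F(\mu)|=|F(\psi_{\mu^*})|\le C\max_{j=1,\dots,m}|\langle\mu^*,\mu^*+2\rho\rangle|^{j}.$$
Since $\mu\mapsto\mu^*$ is a bijection of $\Lambda^+(M)$ preserving the norm (it is $-w^*$ with $w^*\in W$, hence an isometry), we have $|\langle\mu^*,\mu^*+2\rho\rangle|\le |\mu^*|^2+2|\mu^*||\rho|=|\mu|^2+2|\mu||\rho|\le (|\mu|+|\rho|)^2$. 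Therefore $|\tilde F(\mu)|\le C\max_{j=1,\dots,m}(|\mu|+|\rho|)^{2j}\le C'(1+|\mu|)^{2m}$ for a suitable constant $C'$, which is the asserted polynomial bound.

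There is essentially no obstacle here: the only point requiring a word of care is that the estimate in Lemma \ref{le-KinvariantDistribution}(2) is stated for $f\in C^\infty(M)^K$, and one must check that each $\psi_{\mu^*}$ indeed lies in this space — which it does, since spherical functions are $K$-invariant smooth functions on $M$. One could equivalently run the argument through part (3) of that lemma, bounding $\|L_{u_j}(\psi_{\mu^*}|_{A\cdot o})\|_\infty$ by a polynomial in $|\mu|$ using standard estimates for derivatives of spherical functions, but the route via $\Delta_M$ and \eqref{eq-eigenvalue} is cleaner and avoids any such estimates. I expect the proof in the paper to be just a few lines along exactly these lines.
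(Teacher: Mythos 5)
Your proof is correct and follows exactly the route the paper indicates; the paper's own proof consists of the single sentence ``This follows from (\ref{eq-dist1}) and (\ref{eq-eigenvalue}),'' and you have simply spelled out the details of applying the distribution estimate to $\psi_{\mu^*}$, using $\Delta_M\psi_{\mu^*}=-\langle\mu^*,\mu^*+2\rho\rangle\psi_{\mu^*}$, $\|\psi_{\mu^*}\|_\infty\le 1$, and $|\mu^*|=|\mu|$.
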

\begin{proof}
This follows from (\ref{eq-dist1}) and (\ref{eq-eigenvalue}).
\end{proof}

We can now write down the Fourier series for $F$.

\begin{lemma}\label{l:F(f)}
Let $F\in C^{-\infty }(M )^K$ and $f\in C^\infty (M )^K$.
Then
\begin{equation}\label{e:F(f)}
F (f) =\sum_{\mu \in \Lambda^+( M)} d (\mu) \tilde{f}(\mu^* )\tilde{F}(\mu )\,
\end{equation}
with absolute convergence. In particular, the distributional
Fourier transform $F\mapsto\tilde F$
is injective.
\end{lemma}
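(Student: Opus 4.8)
The plan is to expand $f$ in its Fourier series and apply $F$ termwise, justifying the interchange of $F$ with the infinite sum by the continuity estimate available for $K$-invariant distributions. Concretely, by Lemma~\ref{Fourier convergence} the Fourier series
$$\sum_{\mu\in\Lambda^+(M)} d(\mu)\tilde f(\mu)\psi_\mu$$
converges to $f$ in the topology of $C^\infty(M)$, hence in particular in $C^\infty(M)^K$. Since $F\in C^{-\infty}(M)^K$ is by Lemma~\ref{le-distributionrestriction} a continuous linear form on $C^\infty(M)^K$, I may apply $F$ term by term and obtain
$$F(f)=\sum_{\mu\in\Lambda^+(M)} d(\mu)\tilde f(\mu)\,F(\psi_\mu).$$
Now I rewrite $F(\psi_\mu)$ in terms of the Fourier transform as defined in \eqref{eq-FTdistribution}. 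Replacing the summation index $\mu$ by $\mu^*$ (which is a bijection of $\Lambda^+(M)$) and using $d(\mu^*)=d(\mu)$ together with $F(\psi_{(\mu^*)^*})=F(\psi_\mu)$, i.e. $F(\psi_{\mu^*})=\tilde F(\mu)$, yields exactly \eqref{e:F(f)}.

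The remaining point is the absolute convergence of the series on the right-hand side of \eqref{e:F(f)}, which I will establish directly rather than deduce merely from termwise application of a continuous functional. By the previous lemma, $\tilde F$ has at most polynomial growth, so there are $C>0$ and $N\in\Z^+$ with $|\tilde F(\mu)|\le C(1+|\mu|)^N$ for all $\mu$. By Weyl's dimension formula $d(\mu)$ is the restriction of a polynomial, so $d(\mu)\le C'(1+|\mu|)^{M}$ for suitable $C',M$. Since $f$ is smooth, \eqref{eq-growthFTfunctions} gives, for every $k$, a constant $C_k$ with $|\tilde f(\mu^*)|=|\tilde f(\mu^*)|\le C_k(1+|\mu^*|)^{-k}=C_k(1+|\mu|)^{-k}$, using that $|\mu^*|=|\mu|$. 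Choosing $k$ larger than $N+M+t_0$, with $t_0$ as in Lemma~\ref{le-su}, the general term is bounded by a constant times $(1+|\mu|)^{-(k-N-M)}$, and $\sum_{\mu\in\Lambda^+(M)}(1+|\mu|)^{-(k-N-M)}<\infty$ by Lemma~\ref{le-su}. This gives absolute convergence and simultaneously re-justifies the interchange above.

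Finally, injectivity of $F\mapsto\tilde F$ is immediate from \eqref{e:F(f)}: if $\tilde F(\mu)=0$ for all $\mu\in\Lambda^+(M)$, then the right-hand side vanishes for every $f\in C^\infty(M)^K$, so $F(f)=0$ for all such $f$, i.e. $F=0$ as an element of $[C^\infty(M)^K]^*$, hence as an element of $C^{-\infty}(M)^K$ by Lemma~\ref{le-distributionrestriction}. I expect no serious obstacle here; the only mild care needed is to make sure the two ``$\vee$/$*$'' operations are tracked consistently (that $\overline{\psi_\mu}=\psi_{\mu^*}$, $d(\mu^*)=d(\mu)$, and $\mu\mapsto\mu^*$ is an involutive bijection of $\Lambda^+(M)$), which has already been recorded just before \eqref{eq-FTdistribution}, and to invoke the polynomial growth of $\tilde F$ and the summability Lemma~\ref{le-su} in the right order.
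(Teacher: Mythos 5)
Your proof is correct and follows essentially the same route as the paper: expand $f$ in its Fourier series (Lemma~\ref{Fourier convergence}), apply $F$ termwise by continuity, relabel the summation index via the involution $\mu\mapsto\mu^*$, and derive absolute convergence from the polynomial growth of $d(\mu)$ and $\tilde F(\mu)$ together with the rapid decay of $\tilde f$ and the summability estimate in Lemma~\ref{le-su}. The only cosmetic difference is that the paper performs the $\mu\leftrightarrow\mu^*$ relabeling inside the Fourier series before applying $F$, whereas you do it after; this is immaterial.
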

\begin{proof}
It follows from Lemma \ref{Fourier convergence}
that
$$f=\sum_{\mu \in \Lambda^+(M)} d (\mu^* ) \tilde{f}(\mu^* )\psi_{\mu^*}$$
in the topology of $C^\infty (M)^K$.
Since $F$ is continuous we can apply it termwise, and
since
$d (\mu ^*) =d(\mu )$
we then obtain (\ref{e:F(f)}) with convergence in $\C$.
The absolute convergence follows from
Lemma \ref{le-su}, since
$d(\mu)$ and $\tilde F(\mu)$ have at most polynomial
growth with respect to $\mu$.
\end{proof}

\section{Local Paley-Wiener Theorem for $K$-invariant functions on $M$ }
\label{s: main thm}
\noindent
We recall the main results from \cite{OS}.

\begin{definition}
\label{d: PW space}
(Paley-Wiener space)
For $r>0$ let
$\PW_r(\fa)$
denote the space of holomorphic functions $\varphi$ on $\fa_\C^*$
satisfying the following.
\begin{itemize}
\item[a)] For each $k\in\Z^+$ there exists a constant $C_k>0$ such that
$$|\varphi(\gl)|\leq C_k(1+|\gl|)^{-k} e^{r|\Re\gl|}$$
for all $\gl\in\fa_\C^*.$

\item[b)] $\varphi(w(\gl+\rho)-\rho)=\varphi(\gl)$ for
all $w\in W$, $\gl\in\fa_\C^*$.
\end{itemize}
\end{definition}

The following is Theorem 4.2 of \cite{OS}. As pointed
out in \cite{OS}, Remark 4.3, the known value for the
constant $R$ can be different in each part of the theorem.

\begin{theorem}
\label{t: PW}
There exists $R>0$ such that the following holds
for each $0<r<R$.

\begin{itemize}
\item[i)] Let $f\in C^\infty_r(M)^K$.
Then the Fourier transform $\tilde f\colon \Lambda^+(M)\to\C$
extends to a function in $\PW_r(\fa)$.
\item[ii)] Let $\varphi\in\PW_r(\fa)$. There exists
a unique function $f\in C^\infty_r(M)^K$ such that
$\tilde f(\mu)=\varphi(\mu)$ for all $\mu\in \Lambda^+(M)$.
\item[iii)] The functions in the Paley-Wiener space $\PW_r(\fa )$
are uniquely determined by their values on $\LM$.
\end{itemize}
Thus, the Fourier transform followed by the extension gives a
bijection
$$C^\infty_r (M)^K\simeq \PW_r(\fa )\, .$$
\end{theorem}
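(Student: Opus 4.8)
\smallskip\noindent\textbf{Proof proposal.}
The plan is to pass to the radial variables on $A\cdot o$ and to use the holomorphic continuation in the highest weight of the spherical functions together with their Harish--Chandra type expansion. By Lemma~\ref{le-restriction}, once $r$ lies below a first threshold (so that $\Exp$ is a diffeomorphism of $B_r(0)$ onto $D_r(o)$), a function $f\in C^\infty_r(M)^K$ is the same datum as the $W$-invariant function $H\mapsto f(\Exp H)$ on the closed ball $\Brc\subset\fa$. The analytic ingredient I would set up is: there is $R_0>0$ and a family $\lambda\mapsto\Psi_\lambda$ of functions on $B_{R_0}(0)$, holomorphic in $\lambda\in\fa_\C^*$, invariant under the shifted Weyl action $\lambda\mapsto w(\lambda+\rho)-\rho$, with $\Psi_\mu(H)=\psi_\mu(\Exp H)$ for $\mu\in\Lambda^+(M)$, and admitting on each Weyl chamber the expansion
\[
\Psi_\lambda(H)=\sum_{w\in W}\widetilde c\bigl(w(\lambda+\rho)\bigr)\,\Phi_{w(\lambda+\rho)}(H),
\]
where $\Phi_\nu(H)=e^{(\nu-\rho)(H)}\sum_{\kappa}\Gamma_\kappa(\nu)\,e^{-\kappa(H)}$ is the Harish--Chandra series (summed over the positive root lattice), convergent for $|H|<R_0$, holomorphic in $\nu$ off the poles of $\widetilde c$ and $\Gamma_\kappa$, and with leading exponential giving $|\Psi_\lambda(H)|\le C(1+|\lambda|)^{N_0}e^{|\Re\lambda|\,|H|}$ for $|H|<R_0$. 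On $A\cdot o$ this $\Psi_\lambda$ is the hypergeometric function attached to the root system $\Sigma$, for which the continuation and expansion are classical (cf.\ \cite{Vretare}); tracking the radius of convergence of the series imposes a second threshold on $r$.

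With this in hand, (i) is immediate. Writing $\lambda\mapsto\lambda^*$ for the linear (hence holomorphic) extension of the involution $\mu\mapsto\mu^*$ of $\Lambda^+(M)$ and using $\overline{\psi_\mu}=\psi_{\mu^*}$, set
\[
\tilde f(\lambda):=c_0\int_{B_r(0)}f(\Exp H)\,\Psi_{\lambda^*}(H)\,\delta(H)\,dH ,
\]
$\delta$ being the Jacobian density of $\Exp$ (a $W$-averaging absorbing root multiplicities is understood). Differentiation under the integral makes $\tilde f$ holomorphic on $\fa_\C^*$; it agrees with the Fourier coefficients of $f$ at each $\mu\in\Lambda^+(M)$; and $\supp f\subset\Drc$ together with the bound on $\Psi_\lambda$ gives $|\tilde f(\lambda)|\le C(1+|\lambda|)^{N_0}e^{r|\Re\lambda|}$. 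Replacing $f$ by $Df$ for an invariant differential operator $D$, and using (\ref{eq-eigenvalue}) — which on the transform side is multiplication by the $W$-invariant polynomial $\gamma(D)(\lambda+\rho)$ — together with a Phragm\'en--Lindel\"of argument to control the directions in which that polynomial is small, upgrades this to the rapid decay of Definition~\ref{d: PW space}(a); the shifted invariance (b) is inherited from $\Psi_\lambda$. Hence $\tilde f\in\PW_r(\fa)$.

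For (iii), suppose $\varphi\in\PW_r(\fa)$ vanishes on $\Lambda^+(M)$. Choose by Lemma~\ref{semilattice} linearly independent $\mu_1,\dots,\mu_n$ with $\Z^+\mu_1+\dots+\Z^+\mu_n\subset\Lambda^+(M)$ and pass to the corresponding coordinates on $\fa_\C^*$; then $\varphi$ is entire of exponential type $\le r$ and vanishes on $(\Z^+)^n$, so Carlson's uniqueness theorem applied one coordinate at a time — legitimate because the type of $\varphi$ in the $\mu_j$-direction is $r|\mu_j|<\pi$ as soon as $r<\pi/\max_j|\mu_j|$ — forces $\varphi\equiv0$. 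This is the third threshold; note that injectivity of $f\mapsto\tilde f$ is already contained in Lemma~\ref{Fourier convergence}. For (ii), given $\varphi\in\PW_r(\fa)$, putting $\Re\lambda=0$ in Definition~\ref{d: PW space}(a) shows $\mu\mapsto\varphi(\mu)$ is rapidly decreasing on $\Lambda^+(M)$, so by Sugiura's criterion (\ref{eq-growthFTfunctions}) the series $f:=\sum_{\mu\in\Lambda^+(M)}d(\mu)\varphi(\mu)\psi_\mu$ converges in $C^\infty(M)^K$ to a $K$-invariant smooth function with $\tilde f=\varphi$ on $\Lambda^+(M)$, whose holomorphic extension is $\varphi$ by (iii).

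It remains — and this is the hard part — to show $\supp f\subset\Drc$, i.e.\ $f(\Exp H)=0$ for $r<|H|<R_0$. I would argue as in the compact analogue of Helgason's support theorem: insert $\Psi_\mu(H)=\sum_{w}\widetilde c(w(\mu+\rho))\Phi_{w(\mu+\rho)}(H)$ into the series for $f(\Exp H)$, identify the resulting double sum over $w$ and $\mu$ — after a summation-by-parts/residue regularisation, as carried out in \cite{OS} and its predecessors — with a contour integral $\int_{\eta+i\fa^*}\varphi(\lambda)\,\Phi_{\lambda+\rho}(H)\,p(\lambda)\,d\lambda$ plus residue terms, where $p$ is a density that is polynomially bounded on vertical contours and $\eta\in\fa^*$ lies deep in a Weyl chamber, and then push $\eta$ to infinity along the ray $\R_{\le 0}H$ (identifying $\fa$ and $\fa^*$ by the inner product). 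There $\Phi_{\lambda+\rho}(H)$ decays like $e^{(\Re\lambda)(H)}$, so on $\Re\lambda=-tH/|H|$ the integrand is $O\bigl(e^{rt}e^{-t|H|}\bigr)=O\bigl(e^{-t(|H|-r)}\bigr)\to 0$ because $|H|>r$; the residue terms, being lower-dimensional integrals of the same shape, vanish for the same reason, and so $f(\Exp H)=0$. Taking $R$ to be the minimum of the three thresholds above, parts (i)--(iii) combine to the stated bijection $C^\infty_r(M)^K\simeq\PW_r(\fa)$. The principal obstacle is exactly this support statement — converting the discrete Fourier series into a contour integral on which a Paley--Wiener contour shift can be run — and it is where most of the analytic difficulty, and part of the smallness requirement on $r$, reside.
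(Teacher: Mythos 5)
Your parts (i) and (iii) are essentially sound and run along the lines of the source this theorem is quoted from: (i) via the holomorphic continuation $\lambda\mapsto\psi_\lambda$ and Opdam's estimate, with the rapid decay extracted from the eigenvalue identity for $\Delta_M$ (the step you wave at with ``Phragm\'en--Lindel\"of'' --- controlling $\tilde f$ near the complex quadric $\langle\lambda,\lambda+2\rho\rangle=0$, where the eigenvalue is small although $|\lambda|$ is large --- does need its own lemma, but a standard one); and (iii) via Carlson's theorem applied coordinatewise on the sublattice of Lemma \ref{semilattice}, which is exactly the argument of Section 7 of \cite{OS}. Note that the paper itself gives no proof of this theorem: it is recalled verbatim from \cite{OS}, together with a Remark correcting an error in the proof there.

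The genuine gap is in the support statement of (ii), which you yourself single out as the hard part. Your plan --- insert the Harish--Chandra expansion of $\psi_\mu$ into the Fourier series, regularise the lattice sum into a contour integral over $\eta+i\fa^*$ ``as carried out in \cite{OS} and its predecessors'', and then shift the contour --- attributes to \cite{OS} an argument that is not there. The series-to-integral conversion is available only in special situations (rank one, or even multiplicities as in \cite{BOP}, where the spherical functions are finite combinations of exponentials); in general the coefficients $\Gamma_\kappa(\nu)$ and $\tilde c(\nu)$ have poles that can meet the lattice $\rho+\Lambda^+(M)$ and the shifted contours, the expansion degenerates on the Weyl chamber walls (which lie inside $\Omega$), and no residue scheme effecting this conversion has been established. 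The route actually taken in \cite{OS} is entirely different: one extends $\varphi$ from $\fa_\C^*$ to a Weyl-group-invariant entire function on $\fh_\C^*$, where $\fh\supset\fa$ is a Cartan subalgebra of $\fu$, \emph{of the same exponential type} $r$ --- this is precisely the delicate point addressed in the Remark after the theorem, where the naive product construction only yields type $\sqrt2\,r$ and Cowling's theorem is needed --- and then reduces to Fourier analysis on a maximal torus of $U$ (in the spirit of the central-function theorem of \cite{Gon}), descending to $U/K$ by averaging over $K$ via $\psi_\mu(x)=\int_K\chi_\mu(xk)\,dk$, with supports controlled because $U\to U/K$ is a Riemannian submersion. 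As written, your proof of (ii) therefore rests on an unproved (and, at this level of generality, unavailable) analytic identity, and it also bypasses the extension-to-$\fh_\C^*$ difficulty that the paper explicitly flags.
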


\begin{remark}
The proof of this theorem in \cite{OS} is not entirely correct,
as an error occurs in the proof of Corollary 10.2.
The function $\psi(\lambda)=\varphi(\lambda_1)\varphi_m(\lambda_2)$,
constructed in the proof of the corollary
is not of exponential type $r$ as stated, but only
of type $\sqrt{2}r$. This follows from the estimate
$|\lambda_1|+|\lambda_2|\leq \sqrt{2}|\lambda|$,
which is sharp. However, one can apply the theorem
of \cite{Cow} to construct an entire function $\psi$
on ${\mathfrak h}^*_\C$, which is of the proper
exponential type, and which restricts to $\varphi$
on $\mathfrak a$. The rest of the proof is then unchanged.
\end{remark}

\section{Analytic continuation of spherical functions}
\label{s: anal cont}
\noindent
We need some details from \cite{OS} concerning
the analytic continuation of
the spherical functions $\psi_\mu$ with respect to
the parameter $\mu$.

Let $\bar\Omega$ be the closure of
\begin{equation}\label{Omega}
\Omega=\{ X\in\fa \mid ( \forall\alpha\in\Sigma )\,\,
|\alpha(X)|<\frac \pi2\}\, .
\end{equation}
As $U$ is compact, it follows that $U$ is contained
in a complex Lie group $U_\C$ with Lie algebra
$\fu_\C$. Denote by $K_{0,\C}$ the analytic subgroup of
$U_\C$ corresponding to $\fk_\C$. Note that we are at this point not assuming
that $\theta$ extends to an involution on $U_\C$.
Let $\fg=\fk\oplus i\fq\subset \fu_\C$ and let $G$ be the corresponding analytic
subgroup of $U_\C$, then $K_0\subset G$ is a maximal compact subgroup.
The space
$M^d=G/K_0$ is the (noncompact) dual of $U/K_0$. Note that the center
of $G$ is contained in $K_0$ so $M^d$ is independent of the choice of
the complexification $U_\C$.
Let $K_\C = K K_{0,\C}$.
Then $M,M^d
\subset M_\C := U_\C /K_\C$. Then $K_\C$ is a closed subgroup of
$U_\C$.
For each $\mu\in \Lambda^+(M)$ the spherical function $\psi_\mu$ has an
analytic continuation to $M_\C=U_\C/K_{\C}$, denoted by the same symbol, and
$\psi_\mu|_{M^d}=\varphi_{\mu + \rho}$ where $\varphi_\lambda$ denotes
the spherical function on $M^d$ with spectral parameter $\lambda$.
According to \cite{KrSt,Opd} (see also the proof due to J. Faraut in
\cite{BOP}) the spherical function $\varphi_\lambda$ on $M^d$
has a holomorphic extension as a $K_{\C}$-invariant function on
$K_{ \C} \exp(2\Omega )\cdot o$ for every
$\lambda\in\fa^*_\C$.
For each $x\in K_{ \C} \exp(2\Omega )\cdot o$ and
$\gl\in\fa_\C^*$ we define
\begin{equation}
\psi_\gl(x)=\varphi_{\gl + \rho}(x)
\end{equation}
and thus obtain an extension to $\fa_\C^*$ of
the map $\mu\mapsto \psi_\mu(x)$ where
$\mu\in\Lambda^+(M)$.
The map $(\lambda ,x ) \mapsto \psi_\gl(x)$ is
holomorphic on the open set $\fa_\C^*\times \exp(\Omega+i\fa)\cdot
o\subset \fa_\C^*\times A_\C\cdot o$
and it satisfies the following estimate, cf.  \cite{Opd}, Proposition 6.1:

\begin{lemma}\label{t:estimate}
There exists a constant $C$ such that
\begin{equation}\label{eq-t:estimate1}
|\psi_\gl(\exp (X+iY)\cdot o)|\leq
C \, e^{\max_{w\in W}\Re w\gl(X)-\min_{w\in W}\Im w\gl (Y)}
\end{equation}
for all $X\in \bar{\Omega}$, $Y\in\fa$ and $\gl\in\fa_C^*$.
\end{lemma}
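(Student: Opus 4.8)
The plan is to reduce everything to the noncompact dual and to quote there the corresponding estimate for hypergeometric functions, which is \cite{Opd}, Proposition~6.1. Since $\psi_\gl$ is $K_\C$-invariant, it suffices to bound it on the complexified torus $A_\C\cdot o$, where by definition $\psi_\gl(\exp Z\cdot o)=\varphi_{\gl+\rho}(\exp Z\cdot o)$ for $Z=X+iY\in\fa_\C$. On $M^d=G/K_0$ one has the Harish--Chandra integral representation
$$\varphi_\nu(g\cdot o)=\int_{K_0}e^{(\nu+\rho)(\mathcal I(k,g))}\,dk ,$$
where $\mathcal I(k,g)$ is the abelian component of $kg$ in the Iwasawa decomposition of $G$; by \cite{KrSt} the cocycle $\mathcal I(k,\cdot)$, and hence this formula, extend holomorphically in $g$ to the domain $\exp(2\Omega+i\fa)\cdot o$, which contains the domain $\exp(\bar\Omega+i\fa)\cdot o$ of the lemma because $\bar\Omega\subset 2\Omega$ ($\Omega$ being convex and containing $0$).

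I would then estimate $\Re\,(\gl+\rho)(\mathcal I(k,\exp(X+iY)))$, writing $\exp(X+iY)=\exp(iY)\exp(X)$. For the genuine group element $\exp(iY)\in G$, Kostant's convexity theorem places $\mathcal I(k,\exp(iY))$ in $\conv(W\cdot iY)$ up to a $k$-independent bounded term, so the $\gl$-part of the exponent is at most $\max_{w}\Re w\gl(iY)=-\min_{w}\Im w\gl(Y)$. The holomorphic perturbation coming from the small translation $\exp(X)$, $X\in\bar\Omega$, contributes a term lying in $\conv(W\cdot X)$ up to a bounded term --- this is exactly where the choice of the polytope $\Omega$ is used, to keep the perturbation inside the region on which the continued cocycle is controlled --- and, since $\rho(X)\in i\R$, its real part depends on $\gl$ only, which yields $e^{\max_{w}\Re w\gl(X)}$. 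Multiplying the two contributions gives the exponential on the right-hand side of \eqref{eq-t:estimate1}.

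The main obstacle is the uniformity of the constant $C$ over all $\gl\in\fa_\C^*$ and all $Y\in\fa$: one must check that the $\rho$-contribution along $iY$ that was carried along above is exactly absorbed by the decay of the elementary spherical function $\varphi_0$ on $M^d$ --- equivalently, by the Gindikin--Karpelevich form of the Harish--Chandra $c$-function of $M^d$ --- together with the boundedness of the continued cocycle on $\exp(\bar\Omega+i\fa)\cdot o$. This cancellation is the substance of \cite{Opd}, Proposition~6.1, proved there by combining the integral representation (effective near $o$) with the Harish--Chandra series expansion $\varphi_\nu=\sum_{w\in W}\tilde c(w\nu)\,\Phi_{w\nu}$ (effective away from $o$), estimating the $c$-factors $\tilde c(w\nu)$ and the partial series $\Phi_{w\nu}$ separately, and gluing the two estimates past the singularities of the $c$-function by means of the holomorphic dependence of $\gl\mapsto\psi_\gl(x)$ recorded above; for this last point I would simply invoke \emph{loc.\ cit.} (see also the proof by J.~Faraut in \cite{BOP} for the underlying holomorphic continuation).
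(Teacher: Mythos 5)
The paper gives no proof of this lemma --- it simply refers to \cite{Opd}, Proposition~6.1, with the holomorphic continuation itself credited to \cite{KrSt} and \cite{BOP} --- so your proposal, which in the end reduces to exactly that same citation, takes essentially the same route. One caveat on the sketch you wrap around the citation: Opdam's Proposition~6.1 is proved in the hypergeometric setting of arbitrary root multiplicities, where there is no group and hence no Iwasawa integral representation, so the integral-representation-plus-Kostant-convexity argument you outline near $o$ is a reasonable heuristic for the group case but is not \emph{loc.\ cit.}'s actual method; you correctly anticipate this by deferring the uniformity of $C$ to the reference.
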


\begin{corollary}\label{l:estimate derivatives}
Let $X_1,\dots,X_j\in\fa$ and $X\in \Omega$.
There exists a constant $C$ such that
\begin{equation}
|L_{X_1\dots X_j}\psi_\gl(\exp (X)\cdot o)|
\le C \, (1+|\gl|)^{j} e^{|X| |\Re \lambda |}
\end{equation}
for all $\gl\in\fa^*_\C$. The constant $C$ depends
locally uniformly on $X$.
\end{corollary}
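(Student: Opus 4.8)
The plan is to differentiate the estimate of Lemma \ref{t:estimate} by means of a Cauchy-type integral formula on the complexified space $A_\C\cdot o$, exploiting the holomorphicity of $(\lambda,x)\mapsto\psi_\lambda(x)$ on $\fa_\C^*\times\exp(\Omega+i\fa)\cdot o$. First I would fix $X\in\Omega$ and a basis $H_1,\dots,H_n$ of $\fa$, and identify a neighborhood of $\exp(X)\cdot o$ in $A_\C\cdot o$ with a polydisc in $\C^n$ via $(z_1,\dots,z_n)\mapsto\exp(X+\sum_k z_k H_k)\cdot o$. Since $X\in\Omega$, the point $X$ is interior to $\bar\Omega$, so there is a radius $\delta>0$ (depending locally uniformly on $X$) such that $X+\Re z\in\bar\Omega$ and $\Im z\in\fa$ whenever $|z_k|\le\delta$ for all $k$; on this polydisc the function $z\mapsto\psi_\lambda(\exp(X+\sum z_kH_k)\cdot o)$ is holomorphic. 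The left-invariant derivative $L_{X_1\cdots X_j}$ along directions in $\fa$ translates into a constant-coefficient differential operator in the $z_k$ (with coefficients bounded in terms of the $X_i$ expressed in the basis), so it suffices to bound the partial derivatives $\partial^\alpha_z$ of order $|\alpha|\le j$ at $z=0$.

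The key step is then the iterated Cauchy integral formula: for a multi-index $\alpha$ with $|\alpha|\le j$,
$$
\partial^\alpha_z\psi_\lambda(\exp(X)\cdot o)
=\frac{\alpha!}{(2\pi i)^n}\oint_{|z_1|=\delta}\!\!\cdots\!\!\oint_{|z_n|=\delta}
\frac{\psi_\lambda(\exp(X+\sum z_kH_k)\cdot o)}{z_1^{\alpha_1+1}\cdots z_n^{\alpha_n+1}}\,dz_1\cdots dz_n .
$$
On the torus $|z_k|=\delta$ we write $z_k=\Re z_k+i\Im z_k$, so $X+\sum\Re z_kH_k\in\bar\Omega$ and $\sum\Im z_kH_k\in\fa$, and Lemma \ref{t:estimate} gives
$$
\bigl|\psi_\lambda(\exp(X+\textstyle\sum z_kH_k)\cdot o)\bigr|
\le C\,e^{\max_{w\in W}\Re w\lambda(X+\sum\Re z_kH_k)-\min_{w\in W}\Im w\lambda(\sum\Im z_kH_k)} .
$$
Since $|\Re z_k|,|\Im z_k|\le\delta$, the exponent is bounded by $\max_{w}\Re w\lambda(X)+c\delta(|\Re\lambda|+|\Im\lambda|)$ for a constant $c$ depending only on the basis; and $|\Re w\lambda(X)|\le|X|\,|\Re\lambda|$ (as $w$ is orthogonal and $W$ permutes the relevant data). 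Bounding the integrand by its sup times the length $(2\pi\delta)^n$ of the contour, and dividing by $\delta^{|\alpha|+n}$, we obtain
$$
\bigl|\partial^\alpha_z\psi_\lambda(\exp(X)\cdot o)\bigr|\le C'\,\delta^{-|\alpha|}\,e^{|X|\,|\Re\lambda|+c\delta(|\Re\lambda|+|\Im\lambda|)} .
$$

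To conclude I would absorb the unwanted factor $e^{c\delta(|\Re\lambda|+|\Im\lambda|)}$, which is the only point requiring care. The trick is that $\delta$ may be chosen arbitrarily small without affecting the constant $C$ of Lemma \ref{t:estimate} — only the prefactor $\delta^{-|\alpha|}$ grows. For $|\lambda|$ large one takes $\delta=\delta(\lambda)\sim 1/|\lambda|$ (legitimate as long as $\delta(\lambda)$ stays below the fixed threshold for $X\in\Omega$), so that $c\delta(|\Re\lambda|+|\Im\lambda|)=O(1)$ is absorbed into the constant, while $\delta^{-|\alpha|}=O((1+|\lambda|)^{|\alpha|})=O((1+|\lambda|)^j)$; for $|\lambda|$ bounded the estimate is trivial with any fixed $\delta$. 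This yields
$$
\bigl|L_{X_1\cdots X_j}\psi_\lambda(\exp(X)\cdot o)\bigr|\le C(1+|\lambda|)^j e^{|X|\,|\Re\lambda|}
$$
with $C$ depending locally uniformly on $X$ (through the admissible threshold for $\delta$ and through the coefficients of $X_1\cdots X_j$), as claimed. The main obstacle is precisely this balancing of the Cauchy radius against $|\lambda|$ so as to trade the spurious $|\Im\lambda|$-dependence for the polynomial factor $(1+|\lambda|)^j$ without spoiling the exponent $|X|\,|\Re\lambda|$; everything else is a routine application of the Cauchy estimates together with the chain rule relating $L_{X_1\cdots X_j}$ to the coordinate derivatives $\partial^\alpha_z$.
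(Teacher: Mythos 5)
Your proposal is correct and follows essentially the same route as the paper: reduce to coordinate derivatives in a polydisc around $\exp(X)\cdot o$ inside $\exp(\Omega+i\fa)\cdot o$, apply the iterated Cauchy integral formula with Lemma~\ref{t:estimate} as the integrand bound, and then choose the polydisc radius of order $(1+|\lambda|)^{-1}$ to convert the spurious $e^{c\delta(|\Re\lambda|+|\Im\lambda|)}$ into a bounded constant while paying only a polynomial factor $(1+|\lambda|)^j$. The paper takes $\epsilon=\epsilon_0/(\ell(1+|\lambda|))$ uniformly (avoiding your case split between small and large $|\lambda|$), but this is a cosmetic difference.
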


\begin{proof} Let $V$ be a complex neighborhood of $0$
such that
$X+V\subset \Omega +i\fa$. Let $Z_1,\ldots , Z_\ell$
be an
orthonormal basis for $\fa$. By considering linear
combinations and using that $\fa$ is abelian it is
enough to prove the claim for the derivatives
$$\left(\frac{\partial\, }{\partial x_1}\right)^{m_1}\ldots \left(\frac{\partial\, }{\partial x_\ell
}\right)^{m_\ell}\psi_{\gl}(\exp(X+x_1Z_1+\ldots +x_\ell Z_\ell)\cdot o)|_{x_1=\ldots =x_\ell =0}\, .$$

To simplify the notation let
$$f_\lambda (x_1,\ldots ,x_\ell)=\psi_{\gl}(\exp (X+x_1Z_1+\ldots +x_\ell Z_\ell)
\cdot o)\, .$$
We will also use the following notation for $m=(m_1,\ldots ,m_\ell)\in (\Z^+)^\ell$
and $\zeta =(\zeta_1,\ldots ,\zeta_\ell)\in \C^\ell$:
$m!:=m_1!\cdots m_\ell !$, $|m|=m_1+\ldots +m_\ell$,
$m+1=(m_1+1,\ldots ,m_\ell +1)$, and $\zeta^m=\zeta_1^{m_1}\ldots \zeta_\ell^{m_\ell}$.
We also set $\partial^m=(\partial /\partial x_1)^{m_1}\ldots (\partial /\partial x_\ell )^{m_\ell}$.

Let $\epsilon_0>0$ be so small that
$$\{z_1Z_1+\ldots +z_\ell Z_\ell | \, |z_j|<\epsilon_0\,\, \mathrm{for}\,\,
j=1,\ldots ,\ell\}\subset V\, .$$
Then $f_\lambda$ is holomorphic on $\{z=(z_1,\ldots ,z_\ell )\mid |z_j|<\epsilon_0
\,\, \mathrm{for}\,\,
j=1,\ldots ,\ell\}$. By
Cauchy's integral theorem for the derivatives of $f_\lambda$ we get
for each $\epsilon< \epsilon_0$
$$\partial^m f_\lambda (0)=\frac{m!}{(2\pi i)^{\ell}}\oint_{|z_1|=\epsilon} \cdots  \oint_{|z_\ell |=\epsilon}
 \frac{f_\lambda (\zeta )}{\zeta^{m+1}}\, d\zeta_1\ldots d\zeta_\ell\, .$$
Thus (\ref{eq-t:estimate1}) implies, with the same constant
$C$ as in (\ref{eq-t:estimate1}), that
\begin{eqnarray*}
\left|\partial^m f_\lambda (0)\right|
&\le & C m!(2\pi )^{-\ell}\epsilon^{-(|m|+\ell)}e^{|X| |\Re \lambda|}
e^{\epsilon \ell  (|\Re \lambda |+   |\Im \lambda|)} (2\pi \epsilon )^\ell \\
&=& Cm!
e^{\epsilon \ell  (|\Re \lambda |+   |\Im \lambda|)} \epsilon^{-|m|}e^{|X| |\Re \lambda|} \, .
\end{eqnarray*}
Now, take
$$\epsilon =\frac{\epsilon_0}{\ell (1+|\lambda | )}$$
then, with a new constant $C$ depending on $V$, but
independent of $\lambda$ and $X$, we get
\[
\left| \partial^m f_\lambda (0)\right|\le
C(1+|\lambda |)^{|m|}e^{|X| |\Re \lambda|}
\]
as was to be shown.
\end{proof}

\section{Paley-Wiener Theorem for Distributions}\label{s:distributions}
\noindent
In this section we state and prove the Paley-Wiener theorem for
distributions on $M$.
\begin{definition}
\label{d: PW-dist-space}
(Paley-Wiener space for distributions)
For $r>0$ let
$\PW_r^*(\fa)$
denote the space of holomorphic functions $\Phi$ on $\fa_\C^*$
satisfying the following.
\begin{itemize}
\item[a)] There exists a $k\in\Z^+$ and a constant $C_k>0$ such that
$$|\Phi (\gl)|\leq C_k(1+|\gl|)^{k} e^{r|\Re\gl |}$$
for all $\gl\in\fa_\C^*$.

\item[b)] $\Phi (w(\gl+\rho)-\rho)=\Phi(\gl)$ for
all $w\in W$, $\gl\in\fa_\C^*$.
\end{itemize}
\end{definition}

Let $r>0$. A distribution $F$ has
support in $\bar D_r(o)$ if and only
if $F(f)=0$ for all $f\in C^\infty (M)$ with
$\Supp (f)\subset M\setminus \bar D_r(o)$.
Denote by $C_r^{-\infty}(M)$ the space of distributions
that are supported on $\bar D_r(o)$.

\begin{remark}\label{dadokerror}
Recall (see (\ref{eq-dist})) that every distribution $F$
on $M$ satisfies an estimate
$$|F(f)|\leq C \sup_{x\in M, j\leq k} |\Delta_M^j f(x)|\, .$$
If the
support of $F$ is contained in some compact subset $S\subset M$,
it is tempting to replace the supremum over $x\in M$ by the supremum
over $x\in S$, but in general such an estimate is false. The
supremum \textit{has} to be taken over an open neighborhood of $S$
(see \cite{Schwartz}, example p. 95 and discussion p. 98-100).
This causes a minor complication in the proof of Theorem
\ref{t: PW-d}
(this problem appears to be overlooked in \cite{Dadok}).
\end{remark}

We need the following elementary result.

\begin{lemma}\label{le-stonglyHolo} Let
$\Omega\subset\C^\ell$ be open and let $M$
be a differentiable manifold.
Let $f \in C^\infty (\Omega\times M)$,
and assume that $f$ is holomorphic in the first variable.
Then $z \mapsto f(z,\cdot )$
is holomorphic as a map $\Omega\to C^\infty (M)$.
\end{lemma}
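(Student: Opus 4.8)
The plan is to reduce the statement to the classical characterization of holomorphy of vector-valued maps, namely that a map into a Fr\'echet space is holomorphic if and only if it is weakly holomorphic and locally bounded (or, more elementarily, that for maps into a Fr\'echet space it suffices to check holomorphy after composing with a separating family of continuous seminorms, together with continuity). More concretely, I would argue as follows. The topology of $C^\infty(M)$ is given by the seminorms $f\mapsto \sup_{x\in S}|Df(x)|$, where $S$ ranges over compact subsets of $M$ and $D$ over differential operators on $M$; so $z\mapsto f(z,\cdot)$ is holomorphic as a map into $C^\infty(M)$ provided it is continuous and, for every such seminorm, the scalar-valued ``difference quotient'' estimates needed for complex differentiability hold uniformly.

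First I would fix $z_0\in\Omega$ and a relatively compact open polydisc $P$ with $\overline{P}\subset\Omega$ containing $z_0$, and a compact set $S\subset M$ together with a differential operator $D$. Since $f\in C^\infty(\Omega\times M)$, the function $Df$ (differentiation in the $M$-variable) is again smooth on $\Omega\times M$ and still holomorphic in $z$; thus it suffices to treat $D=\mathrm{id}$. On the compact set $\overline{P}\times S$ all first-order partials $\partial f/\partial z_j$ exist, are continuous, and are bounded, say by a constant $C$. For $z,z'\in P$ with $z'=z+h$, writing the difference as a line integral of $d f$ along the segment from $z$ to $z'$ (which stays in the convex set $P$), we get $|f(z',x)-f(z,x)|\le C|h|$ uniformly in $x\in S$; this shows $z\mapsto f(z,\cdot)$ is (locally Lipschitz, hence) continuous into $C^\infty(M)$. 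For differentiability, I would show that the candidate derivative is $x\mapsto \sum_j h_j\,\partial f/\partial z_j(z,x)$ in the direction $h$: again by the fundamental theorem of calculus applied to $t\mapsto f(z+th,x)$, the remainder
$$f(z+h,x)-f(z,x)-\sum_j h_j\frac{\partial f}{\partial z_j}(z,x)=\int_0^1\Big(\sum_j h_j\big(\tfrac{\partial f}{\partial z_j}(z+th,x)-\tfrac{\partial f}{\partial z_j}(z,x)\big)\Big)\,dt,$$
and by uniform continuity of each $\partial f/\partial z_j$ on the compact set $\overline{P}\times S$ this is $o(|h|)$ uniformly for $x\in S$. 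Hence the seminorm of the remainder divided by $|h|$ tends to $0$, which is exactly (complex) differentiability of $z\mapsto f(z,\cdot)$ with respect to the seminorm indexed by $(S,D)$.

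Finally, since $f$ is holomorphic in $z$ for each fixed $x$, the scalar derivatives $\partial f/\partial z_j$ satisfy the Cauchy–Riemann equations pointwise in $x$, so the complex-linear dependence on $h$ above is genuine: the Fr\'echet derivative just computed is $\C$-linear, i.e.\ $z\mapsto f(z,\cdot)$ is holomorphic, not merely real-differentiable. As $C^\infty(M)$ is complete (it is a Fr\'echet space) and the above holds for every compact $S$ and every $D$, the map is holomorphic $\Omega\to C^\infty(M)$. The only mild subtlety—the ``main obstacle'', though it is small here—is making sure all estimates are uniform in $x$ over the compact set $S$; this is guaranteed because we restrict to $\overline{P}\times S$ compact, on which the continuous functions $\partial f/\partial z_j$ are bounded and uniformly continuous. (Alternatively, one can invoke the general principle that a locally bounded weakly holomorphic map into a quasi-complete locally convex space is holomorphic, applied to the separating family of functionals $f\mapsto Df(x)$, $x\in M$; but the direct argument above is self-contained.)
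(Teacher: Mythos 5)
Your proof is correct, but it takes a somewhat different route from the paper's. The paper first establishes that the coordinate difference quotients $\frac{1}{h}\bigl(f(a+he_j,\cdot)-f(a,\cdot)\bigr)$ converge to $\frac{\partial f}{\partial x_j}(a,\cdot)$ in $C^\infty(M)$, then \emph{composes with an arbitrary continuous linear functional} $T$ on $C^\infty(M)$: this reduces matters to showing that each scalar function $a\mapsto T(f(a,\cdot))$ is $C^1$ and satisfies the Cauchy--Riemann equations, hence is holomorphic, after which the abstract theorem that weak holomorphy implies strong holomorphy for maps into a Fr\'echet space finishes the argument. You instead prove strong holomorphy directly, seminorm by seminorm: after the harmless reduction from $D$ to $\mathrm{id}$, you use the fundamental-theorem-of-calculus representation of the remainder and uniform continuity of $\partial f/\partial z_j$ on the compact set $\overline P\times S$ to show the Fr\'echet difference quotient is $o(|h|)$ in each seminorm, and then observe that the resulting derivative is $\C$-linear. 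What the paper's route buys is brevity and avoidance of explicit uniform remainder estimates, at the cost of citing the weak-to-strong criterion; what your route buys is a self-contained, elementary argument that never invokes that criterion (though you do note it as an alternative at the end, which is essentially the paper's proof). Both are valid; the only thing to watch in your version is that the identity you use for the remainder already presupposes holomorphy in $z$ for fixed $x$ (so that $\frac{d}{dt}f(z+th,x)=\sum_j h_j\,\partial f/\partial z_j(z+th,x)$), which you do have by hypothesis, so the step is justified.
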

\begin{proof}
We first observe that for $a\in\Omega\subset\real^{2\ell}$
and $f \in C^\infty (\Omega\times M)$, we have
$$\frac {f(a+he_j,\cdot)-f(a,\cdot)}h\to
\frac{\partial f}{\partial x_j}(a,\cdot)$$
in $C^\infty(M)$ for $h\to 0$ and $j=1,\dots,2\ell$.
Hence, if $T$ is a continuous linear form on $C^\infty(M)$,
it follows that $a\mapsto T(f(a,\cdot))$
is differentiable on $\Omega$ with
$$\frac\partial{\partial x_j} [T(f(a,\cdot))]=
T( \frac{\partial f}{\partial x_j}(a,\cdot) ).$$

It follows from this observation that $z\mapsto T(f(z,\cdot))$
is continuously differentiable and satisfies the Cauchy-Riemann
equations, for each continuous linear form $T$ on $C^\infty(M)$.
Hence $z \mapsto f(z,\cdot )$ is weakly holomorphic into
$C^\infty(M)$, and, as
this space is Fr\'echet, also strongly holomorphic.
\end{proof}

\begin{theorem}[Local Paley--Wiener theorem for distributions]
\label{t: PW-d}
There exists $R>0$ such that the following holds
for each $0<r<R$.

\begin{itemize}
\item[i)] Let $F\in C^{-\infty}_r(M)^K$.
Then the Fourier transform $\tilde{F} \colon \Lambda^+(M)\to\C$
extends  to a function in $\PW_r^*(\fa)$.

\item[ii)] Let $\Phi \in\PW_r^*(\fa)$. There exists
a unique distribution $F\in C^{-\infty}_r(M)^K$ such that
$\tilde{F} (\mu)=\Phi(\mu)$ for all $\mu\in \Lambda^+(M)$.

\item[iii)] The functions in the Paley-Wiener space $\PW_r^*(\fa )$
are uniquely determined by their values on $\Lambda^+(M)$.
\end{itemize}
Thus, the Fourier transform followed by the extension gives a bijection
$$C_r^{-\infty}(M )^K\simeq \PW_r^*(\fa )\, .$$
\end{theorem}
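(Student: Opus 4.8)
The plan is to deduce the distributional Paley--Wiener theorem from the smooth one (Theorem \ref{t: PW}) by duality, exploiting the pairing \eqref{e:F(f)} between $C^{-\infty}(M)^K$ and $C^\infty(M)^K$.

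\smallskip\noindent\textbf{Part (i).} Given $F\in C^{-\infty}_r(M)^K$, I would first produce an estimate on $\tilde F(\mu)=F(\psi_\mu^\vee)$ that only sees $\psi_\mu$ near $\bar D_r(o)$. By Lemma \ref{le-KinvariantDistribution}(3) there exist $u_1,\dots,u_s\in U(\fa)$ and $C>0$ with $|F(f)|\le C\max_j\|L_{u_j}(f|_{A\cdot o})\|_\infty$ for all $f\in C^\infty(M)^K$. The subtlety flagged in Remark \ref{dadokerror} is that this $\sup$ cannot be restricted to $\bar D_r(o)\cap A\cdot o$; instead, for any $\eps>0$ one can choose a cutoff $\chi\in C^\infty(M)^K$ with $\chi\equiv 1$ on a neighborhood of $\bar D_r(o)$ and $\supp\chi\subset\bar D_{r+\eps}(o)$, so that $F(f)=F(\chi f)$ and hence $|F(f)|\le C'\max_j\|L_{u_j}((\chi f)|_{A\cdot o})\|_\infty$ with the sup now effectively over $\bar D_{r+\eps}(o)\cap A\cdot o$. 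Applying this to $f=\psi_\mu^\vee$ (equivalently $\psi_{\mu^*}$) together with Corollary \ref{l:estimate derivatives} (choosing $r+\eps<$ the radius ensuring $\bar D_{r+\eps}(o)$ sits inside $\Exp(\Omega)$) gives $|\tilde F(\mu)|\le C''(1+|\mu|)^k e^{(r+\eps)|\Re\mu|}$ for a fixed $k$. To get the sharp exponent $r$ rather than $r+\eps$, I would regularize: convolve $F$ on $M$ with an approximate identity supported in a small ball, giving smooth functions $F_\delta\in C^\infty_{r+\delta}(M)^K$ with $\tilde F_\delta(\mu)=\tilde F(\mu)\tilde\phi_\delta(\mu)$; apply Theorem \ref{t: PW}(i) to each $F_\delta$, and let $\delta\to 0$ using that $\tilde\phi_\delta\to 1$ locally uniformly and a Phragmén--Lindelöf / normal-families argument to pass the exponential-type bound to the limit. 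The Weyl-invariance (b) of $\PW^*_r(\fa)$ holds because it already holds on $\Lambda^+(M)$ (as $\mu\mapsto w(\mu+\rho)-\rho$ permutes weights and $\psi$ transforms correctly) and persists under holomorphic extension by uniqueness of analytic continuation. Lemma \ref{le-stonglyHolo} (applied with the holomorphic family $(\gl,x)\mapsto\psi_\gl(x)$ from Section \ref{s: anal cont}) justifies that $\gl\mapsto F(\psi_\gl^\vee)$ is holomorphic, so $\tilde F$ indeed extends holomorphically.

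\smallskip\noindent\textbf{Part (ii) and (iii).} Given $\Phi\in\PW^*_r(\fa)$, I would define a candidate distribution directly by its Fourier series: set $F(f):=\sum_{\mu\in\Lambda^+(M)}d(\mu)\tilde f(\mu^*)\Phi(\mu)$ for $f\in C^\infty(M)^K$. For $f\in C^\infty_r(M)^K$ the series converges absolutely by Theorem \ref{t: PW}(i) (which forces $\tilde f(\mu^*)$ to decay faster than any power of $(1+|\mu|)$) together with the polynomial bound on $d(\mu)\Phi(\mu)$ and Lemma \ref{le-su}; moreover the bound $|F(f)|\le C\sup_{j\le m}\|\Delta_M^j f\|_\infty$ on all of $C^\infty(M)^K$ follows by writing $\tilde f(\mu^*)$ in terms of $\widetilde{\Delta_M^j f}$ via \eqref{eq-eigenvalue} and summing, so $F\in C^{-\infty}(M)^K$ by Lemma \ref{le-KinvariantDistribution}. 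Taking $f=\psi_\nu^\vee$ and using the Schur relation \eqref{Schur} shows $\tilde F(\nu)=\Phi(\nu)$. The support statement $\supp F\subset\bar D_r(o)$ is the crux: I would show $F(f)=0$ whenever $f\in C^\infty(M)^K$ has support disjoint from $\bar D_r(o)$, and reduce the general (non-$K$-invariant) test function case to the $K$-invariant one via $\pr$ as in Lemma \ref{le-distributionrestriction}. For this, approximate $\Phi$ by the Paley--Wiener functions $\Phi_\delta=\Phi\cdot\tilde\phi_\delta$ where $\phi_\delta$ is a smooth approximate identity of support radius $\delta$; then $\Phi_\delta\in\PW_{r+\delta}(\fa)$ (ordinary, rapidly decreasing, since multiplying by the rapidly decreasing $\tilde\phi_\delta$ kills the polynomial factor), so by Theorem \ref{t: PW}(ii) there is $f_\delta\in C^\infty_{r+\delta}(M)^K$ with $\tilde f_\delta=\Phi_\delta$, and $F$ is the weak-$*$ limit of the $f_\delta$ (regarded as distributions) — because $\langle f_\delta, h\rangle=\sum d(\mu)\tilde h(\mu^*)\Phi_\delta(\mu)\to\sum d(\mu)\tilde h(\mu^*)\Phi(\mu)=F(h)$ for every $h\in C^\infty(M)^K$ by dominated convergence. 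Since each $f_\delta$ is supported in $\bar D_{r+\delta}(o)$ and $\bigcap_{\delta>0}\bar D_{r+\delta}(o)=\bar D_r(o)$, the weak-$*$ limit $F$ is supported in $\bar D_r(o)$. Uniqueness in (ii) is immediate from injectivity of the distributional Fourier transform (Lemma \ref{l:F(f)}), and (iii) follows since two functions in $\PW^*_r(\fa)$ agreeing on $\Lambda^+(M)$ are the Fourier extensions of distributions with equal Fourier transforms, hence equal, hence have the same holomorphic extension.

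\smallskip\noindent\textbf{Main obstacle.} I expect the genuine difficulty to be the sharpness of the exponent $r$ (not just $r+\eps$) in part (i) and, symmetrically, the exact support bound $\bar D_r(o)$ (not just $\bar D_{r+\eps}(o)$) in part (ii) — in both cases the naive cutoff/regularization argument loses an $\eps$, and recovering it requires a limiting argument in which the exponential-type estimate survives the passage to the limit. The cleanest route is the regularization by approximate identities together with a Montel/Phragmén--Lindelöf argument for the holomorphic extensions; getting all the constants to be uniform as $\delta\to 0$, and checking that the approximate identities $\phi_\delta$ can be taken $K$-invariant with $\tilde\phi_\delta\to 1$ locally uniformly on $\fa_\C^*$ while $\tilde\phi_\delta$ stays rapidly decreasing of exponential type $\delta$, is where the real work lies. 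The secondary technical point, already isolated in Remark \ref{dadokerror}, is that the distributional seminorm estimate must be taken over an open neighborhood of the support, which is exactly why the cutoff $\chi$ supported in $\bar D_{r+\eps}(o)$ is needed and why one cannot avoid the limiting step.
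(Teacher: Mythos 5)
Your parts (ii) and (iii) follow the paper's argument almost exactly: defining $F$ by the Fourier series, estimating the series via $\Delta_M$ and Weyl's dimension formula together with Lemma \ref{le-su}, verifying $\tilde F(\mu)=\Phi(\mu)$ via the Schur relation, and controlling the support by multiplying $\Phi$ by $\widetilde{f_\eps}$ (an approximate identity of support radius $\eps$), applying Theorem \ref{t: PW}(ii), and letting $\eps\to 0$.

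For part (i), the first half of your argument (the $\eps$-enlarged cutoff $\chi$, the estimate via Lemma \ref{le-KinvariantDistribution}(3), Corollary \ref{l:estimate derivatives}, and Lemma \ref{le-stonglyHolo} for holomorphicity) matches the paper. But the step where you remove the $\eps$ is genuinely different, and as stated it has a gap. You propose to convolve $F$ with approximate identities $\phi_\delta$, apply Theorem \ref{t: PW}(i) to $F_\delta$, and then pass to the limit $\delta\to 0$ via a Phragm\'en--Lindel\"of or normal-families argument. The problem is that the constants in the smooth Paley--Wiener estimate for $F_\delta$ are not uniform as $\delta\to 0$: $F$ is a distribution, so the $C^\infty$-seminorms of $F*\phi_\delta$ blow up, and the bounds of Theorem \ref{t: PW}(i) give you $|\tilde F_\delta(\lambda)|\le C_{k,\delta}(1+|\lambda|)^{-k}e^{(r+\delta)|\Re\lambda|}$ with no control on $C_{k,\delta}$. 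Pointwise convergence $\tilde F_\delta\to\tilde F$ (from $\tilde\phi_\delta\to 1$) does not let you transfer such a bound to $\tilde F$, and the Phragm\'en--Lindel\"of step you invoke is not specified precisely enough to see how it would recover uniformity. The paper avoids this entirely with a direct, self-contained estimate: it does not bootstrap from the smooth theorem in part (i) at all. Instead it chooses the cutoff $\varphi=h_\delta(d(\cdot,o)-r)$ with $h_\delta(t)=h(t/\delta)$, whose derivatives satisfy $|h_\delta^{(j)}|\le C\delta^{-j}$, so that (\ref{eq-widetildeF}) together with (\ref{e:estimate1}) and (\ref{e:estimate2}) gives $|\tilde F(\lambda)|\le C\,\delta^{-m}(1+|\lambda|)^m e^{(r+\delta)|\Re\lambda|}$ uniformly in $\delta\le\eps$. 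The key move, which your proposal is missing, is then to let $\delta$ depend on $\lambda$: taking $\delta=\eps(1+|\lambda|)^{-1}$ turns the factor $\delta^{-m}$ into $\eps^{-m}(1+|\lambda|)^m$ (absorbed in the polynomial) and makes $e^{\delta|\Re\lambda|}\le e^{\eps}$ bounded, so the exponent drops to exactly $r$. This is the standard H\"ormander trick for the Euclidean case, and it gives the sharp exponent in one stroke without any limiting or complex-analytic argument.
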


\begin{remark}
Note, that as in Theorem \ref{t: PW}, $R$ can be different
in each part of the above theorem.
\end{remark}

\begin{proof}
(i) Let $R>0$ be such that ${D_R(o)}\subset
K\exp\Omega\cdot o$, where $\Omega$ is defined in
(\ref{Omega}).
Let $r<R$ and let $\epsilon>0$ be so that $r+\epsilon <R$.
Let  $\varphi\in C^\infty (M)^K$ be a function
which is 1 on a neighborhood of
the closed ball $\bar D_r(o)$, and
supported on
$\bar D_{r+\epsilon}(o)$.
The product $\varphi\psi_\lambda$ is a globally
defined smooth function on $M$, and it belongs to $C^\infty(M)^K$
for all $\gl\in\fa_\C^*$.
Let $F\in C_r^{-\infty }(M)^K$.
We extend the Fourier transform
of $F$ to a function on $\fa_\C^*$ by
\begin{equation}\label{eq-extension}
\widetilde{F}(\lambda ):=F(\varphi \psi_\lambda^\vee)\, .
\end{equation}
The extension is independent of the choice of $\varphi$. Note
also that
$\widetilde{F}(w(\lambda+\rho)-\rho)=\widetilde{F}(\lambda)$
and that
\begin{equation*}
\widetilde{F}(\lambda )=
F(\varphi \varphi_{\lambda+\rho}^\vee )
=F (\varphi \varphi_{-\lambda -\rho})
=F(\varphi \psi_{-\lambda -2\rho})\, .
\end{equation*}

Since the map $(\lambda,x)\mapsto \psi_\lambda(x)$
is smooth on the open subset $\fa_\C^*\times \exp(\Omega)\cdot o$
of $\fa_\C^*\times A\cdot o$, it follows that
$(\lambda,x)\mapsto \varphi(x)\psi_\lambda(x)$
is smooth on $\fa_\C^*\times A\cdot o$.
By Lemma \ref{le-stonglyHolo} it follows
that $\lambda \mapsto \varphi \psi_{\lambda}$ is
holomorphic into $C^\infty(A\cdot o)$,
and as it is also $W$-invariant in the $A$-variable,
it follows from Lemma \ref{le-restriction}
that it is holomorphic into $C^\infty(M)^K$.
Hence $\lambda\mapsto \widetilde{F}(\lambda)$ is
holomorphic on $\fa^*_\C$.

We still need to show that this extension has exponential
growth with exponent $r$.
For that we will choose the function $\varphi$
of (\ref{eq-extension}) in such a way
that we can control the right hand side of equation
(\ref{eq-widetildeF}) below (this is similar to
what is done in the Euclidean case, see for example
\cite{Horm}).

As $F$ is a $K$-invariant distribution, it
follows by Lemma  \ref{le-KinvariantDistribution} that there exists
finitely many $u_1,\dots, u_s\in U(\fa )$ and a constant
$C>0$, such that
\begin{equation}\label{eq-widetildeF}
|\widetilde{F}(\lambda )|\leq C \max_{i=1,\dots,s}
\|L_{u_i} (\varphi|_{A\cdot o} \psi_{\lambda}^\vee |_{A\cdot a} )\|_\infty \, .
\end{equation}

Let $m=\max_{i=1,\dots,s}\deg u_i$.
Let $h:\R\to \R$ be such that $0\le h\le 1$,
$h|_{(-\infty, 1/3]}=1$, and $\Supp (h)\subseteq (-\infty , 2/3]$.
Let $C>0$ be such that $\|h^{(j)}\|_\infty \le C$ for $j=0,\ldots
,m$ (where $m$ is as above).
Finally,
for $\delta>0$ let
$h_\delta (t)=h(t/\delta)$. Then $h_\delta$ has the properties that

\begin{enumerate}
\item $0\le h_\delta \le 1$,
\item $h_\delta (t)=1$ for all $t\le \delta/3$,
\item $h_\delta (t)=0$ if $2\delta/3 \le t$,
\item $|h_\delta^{(j)}(t)|\le C\delta^{-j}$ for all $t\in\R$,
$j=0,1,\ldots ,m$  and $\delta>0$.
\end{enumerate}

Recall that $r+\epsilon<R$ and
let  $\delta\leq \gep$ be arbitrary for the moment.
Then $r+\delta<R$. Let
$$\varphi (x)=h_\delta (d(x,o)-r)$$
for $x\in M$.
Then
$\Supp (\varphi)\subset \bar D_{r+\epsilon}(o)$ and
$\varphi=1$ on a neighborhood of $\bar D_r(o)$.
Let $j\leq m$ and let
$X_1,\ldots ,X_j\in \fa$ with $|X_i|=1$. By applying the
chain and Leibniz rules we obtain
\begin{equation}
|X_1\ldots X_j \varphi (x)|\le C_1\delta^{-j}
\label{e:estimate1}
\end{equation}
for some constant $C_1>0$.
Note that $C_1$ is independent of $\delta$.
In fact it  only depends  on the
constant $C$ above, and the derivatives of $x\mapsto d(x,o)$ on the compact
set  $\{x\in M\mid r\le d(x,o)\le r+\gep\}$.
As $d(\cdot ,o)$ is smooth away from
$o$ it follows that those derivatives are bounded independently of $\delta$.


For the derivatives $X_1\ldots X_j \psi_{\lambda}^\vee $ we note first that
$|\Re (-\lambda -2\rho)|=
|\Re \lambda|$ as $\rho \in i\fa^*$.
By Corollary \ref{l:estimate derivatives} we get for $X\in \fa$, $|X|<R$:
\begin{equation}\label{e:estimate2}
|X_1\ldots X_j\psi_\lambda^\vee (\Exp X)|=D(1+|\lambda |)^je^{(r+\delta )|\Re (\lambda)|}
\end{equation}
for some constant $D$, independent of $\lambda$.

Using (\ref{eq-extension}),
the estimates (\ref{eq-widetildeF}), (\ref{e:estimate1}),
(\ref{e:estimate2}), and the Leibnitz rule,  it follows that there
exists a constant $C>0$ such that
for all $\lambda
\in\fa_\C^*$ and every $\delta\leq\eps$ we have
$$|\widetilde{F}(\lambda )|\le C\delta^{-m}(1+|\lambda |)^{m}
e^{(r+\delta)|\Im \lambda |}\, .$$
We now specialize to $\delta=(1+|\gl|)^{-1}\epsilon$ and conclude that
$\widetilde{F}\in \PW_r^*(\fa)$.

(ii) \def\newPsi{\Phi}
Let $\newPsi \in \PW_r^*(\fa)$. The asserted uniqueness
of $F$ follows from Lemma \ref{l:F(f)}.
Motivated by (\ref{e:F(f)}) in that lemma
we define $F : C^\infty (M)^K\to \C$ by
\begin{equation}\label{eq-defF}
F(f):=\sum_{\mu \in \Lambda^+(M)} d (\mu^* ) \tilde{f}(\mu^* )\newPsi (\mu )\, .
\end{equation}
We need to justify the convergence of the sum.
Let $\omega (\lambda )=\langle \lambda ,\lambda +2\rho\rangle$.
Then
$\Delta_M\psi_\mu =-\omega (\mu )\psi_\mu$.
Let $\Lambda_1=\Lambda^+(M)\setminus \{0\}$, and observe that
$\omega(\mu)>0$ for all $\mu\in\Lambda_1$.
Let $D_1>0$ be such that
$$(\forall \mu \in \Lambda_1)\qquad \omega (\mu )
\ge D_1 (1+| \mu |)\, .$$
By Weyl's dimension formula, there exists a constant $D_2>0$ and
$m\in \Z^+$ such
that
$$d (\mu^*) \le D_2(1+|\mu |)^{m}\, .$$
Let $k\in \Z^+$ be such that
$$|\newPsi (\gl)|\leq C (1+|\gl|)^{k} e^{r|\Re\gl|}\, ,$$
and let $s\in \Z^+$ be  such that
$$s>m+k+\ell (U)\, .$$
For the sum over $\Lambda_1$ we use Lemma \ref{le-su} and
the fact that $\Delta_M\psi_\mu = -\omega (\mu )\psi_\mu$ to get:
\begin{eqnarray}
\label{estimate sum}
\sum_{\mu\in\Lambda_1}d (\mu^*)|\widetilde{f}(\mu^*)||\newPsi(\mu )|
&=&
\sum_{\mu\in\Lambda_1}d (\mu^*)\frac{|\widetilde{(\Delta_{M}^s f)}(\mu^*)|}{|\omega (\mu ) |^{s}}
|\newPsi(\mu )|\\
&\le &C_s\left( \sum_{\mu\in \Lambda_1}(1+|\mu |)^{m+k-s}\right)\|\Delta_M^s(f)\|_\infty\\
&<&\infty \, .
\end{eqnarray}
Here $C_s=CD_1D_2$. It follows that
$$
|F (f)|\le |\newPsi (0)| \|f\|_\infty +C_s\|L_M^sf \|_\infty<\infty\, .$$
Thus $F$ is well defined and continuous. It is linear by definition. It follows
that $F\in C^{-\infty}(M)^K$.
Let $\mu \in \Lambda^+ (M)$. By application of (\ref{eq-defF})
to $f=\psi_\mu^\vee$
it follows, using (\ref{Schur}),
that $\widetilde{F}(\mu )=\newPsi (\mu)$.

To finish the proof of (ii), we need to show  $\Supp (F)\subseteq \Drc$.
For each $\epsilon  >0$ such that $r+\epsilon <R$ let $f_\gep\in C^\infty (M)^K$ be
positive with $\Supp (f_\gep)\subseteq \Bec$ and
$\int_{M} f_\gep (x)\, dx=1$. Note
that $|\widetilde{f_\gep}(\mu )|\le 1$ for all $\mu \in \gL^+ (M)$ and
that $\lim_{\gep \to 0} \widetilde{f_\gep}(\mu)\to 1$ for
each $\mu\in\Lambda^+(M)$.

Denote the holomorphic extension of $\widetilde{f_\gep}$
by the same letter and recall that $\widetilde{f_\gep}\in \PW_{\gep}(\fa )$.
Let $\phi_\gep(\lambda ) :=\newPsi(\lambda )\widetilde{f_\gep } (\lambda )$.
Then $\phi_\gep\in \PW_{r+\gep}(\fa )$. By Theorem \ref{t: PW}, part (ii), there
exists $F_\gep \in C_{r+\epsilon}^\infty ( M)^K$ such that
$\widetilde{F_\gep} = \phi_\gep$. In particular
$$\int f(x) F_\gep (x)\, dx=
\sum_{\mu\in\gL^+ (M)}d (\mu^* )\tilde{f}(\mu^* )\widetilde{f_\gep}(\mu )
\newPsi(\mu )$$
for all $f\in C^\infty (M)^K$.
Hence, using (\ref{estimate sum}) to justify the limit,
$$\lim_{\gep \to 0}\int_{M} f(x)F_\gep (x)\, dx\, =F(f).$$
As the support of $F_\gep$ is contained in
$\bar
{D}_{r+\epsilon}(o)$  it follows
that the support of $F$ is contained in
$$\bigcap_{\gep>0}\bar{D}_{r+\epsilon}(o) = \Drc\, .$$

(iii) follows as in the proof of Theorem \ref{t: PW} in \cite{OS},
given in Section 7 of that paper.\end{proof}

Let $F\in C^{-\infty}(M)^K$. The \textit{singular support of $F$},
is the complement of
the largest open
set on which $F$ is given by a smooth function.

\begin{theorem}[Characterization of singular support]\label{t:singsupp}
Let $R$ be as in Theorem \ref{t: PW} and
$0<s\le r<R$. Let $F\in C^{-\infty}_r(M)^K$. Then the singular
support of $F$ is contained in $\Bsc$ if and only if there exists
$N\in \Z$ such that for each $m\in \Z^+$ the holomorphic
extension of $\tilde{F}$ satisfies
$$|\tilde{F}(\lambda)|\le C_m (1+|\lambda |)^N e^{s|\Re \lambda |}$$
for all $\lambda \in \fa_\C^*$ with $|\Re \lambda |\le m \log
(1+|\lambda |)\, .$
\end{theorem}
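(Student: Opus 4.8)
The plan is to reduce the statement to H\"ormander's classical theorem on $\real^n$ (cited in the introduction) by transferring the problem to the flat model via the Fourier series expansion, and then patch the two directions separately.

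For the ``only if'' direction, suppose $\SSup(F)\subset\Bsc$. Decompose $F=F_1+F_2$ where $F_1$ has support in $\Bsc$ (say $F_1=\varphi F$ for a cut-off $\varphi$ equal to $1$ near $\Bsc$ and supported in $\bar D_{s'}(o)$ with $s<s'<R$, chosen so small that $F-F_1$ vanishes near $o$ hence is smooth on a neighbourhood of $\bar D_{s'}(o)$; here one must be careful, as in Remark \ref{dadokerror}, to take $s'$ strictly larger than $s$). Then $F_1\in C^{-\infty}_{s'}(M)^K$ but actually we want the exponent $s$, not $s'$. Instead the cleaner route is: $F_2:=F-\varphi F$ is smooth, and $\varphi F$ has singular support exactly $\SSup(F)\subset\Bsc$ while $\Supp(\varphi F)\subset\bar D_{s'}(o)$; apply the estimate of Corollary \ref{l:estimate derivatives} together with the distributional estimate (\ref{eq-dist1}) for $\varphi F$ but track the derivatives more carefully in the region where $\psi_\lambda$ is holomorphic --- precisely, on the strip $|\Re\lambda|\le m\log(1+|\lambda|)$ the exponential factor $e^{|X||\Re\lambda|}$ with $|X|\le s$ is dominated by $e^{s|\Re\lambda|}$, and the contribution of the smooth part $F_2\in C^\infty_{r}(M)^K$ gives by Theorem \ref{t: PW} a term in $\PW_r(\fa)$, which is rapidly decreasing and hence harmless. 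The key point making this work on the restricted strip rather than all of $\fa_\C^*$ is that one is allowed to lose the small $\delta$-gain from the proof of Theorem \ref{t: PW-d}(i): there one took $\delta=(1+|\lambda|)^{-1}\epsilon$ to absorb $e^{\delta|\Re\lambda|}$ into the polynomial factor, but on $|\Re\lambda|\le m\log(1+|\lambda|)$ one may instead take $\delta$ a fixed small constant, at the cost of replacing $r$ by $r+\delta$; iterating/taking $\delta\to0$ is not needed because here one applies the construction to $\varphi F$ with support in $\bar D_{s'}(o)$ and notes $s'+\delta$ can be kept $<$ any prescribed bound $>s$, so one gets $|\tilde F(\lambda)|\le C_m(1+|\lambda|)^N e^{(s+\delta)|\Re\lambda|}$ for every $\delta>0$ on the strip; a standard Phragm\'en--Lindel\"of / convexity argument in $\delta$ then upgrades $s+\delta$ to $s$.

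For the ``if'' direction, assume $\tilde F$ satisfies the stated estimate on the strips $|\Re\lambda|\le m\log(1+|\lambda|)$ with a fixed $N$. Fix any $s'$ with $s<s'<R$; the goal is to show $F$ agrees with a smooth function on the open set $M\setminus \bar D_{s'}(o)$, and then let $s'\downarrow s$. Write $F=\sum_{\mu}d(\mu)\tilde F(\mu)\psi_\mu$ (the series (\ref{Fourier series}), which converges to $F$ in $C^{-\infty}(M)^K$ by Lemma \ref{l:F(f)}). The plan is to split this series according to whether $\mu$ lies in or outside the H\"ormander strip region, exactly as in \cite{Horm}, Theorem 1.7.7: the part of the sum over $\mu$ with, say, $|\mu|$ such that the strip condition is ``active'' is controlled directly and yields, after using the estimate of Lemma \ref{t:estimate} for $\psi_\mu$ evaluated at points of $A\cdot o$ at distance $>s'$ from $o$, a term that is smooth there because the exponential growth $e^{s|\Re\lambda|}$ is beaten by the decay $e^{-|X||\Re(\text{shift})|}$-type factor coming from evaluating $\psi_\mu=\varphi_{\mu+\rho}$ away from the origin (this is where the gap $s'-s>0$ is spent); the complementary part of the sum, where $|\Re\lambda|>m\log(1+|\lambda|)$ would be needed, is instead handled by the fact that $\mu\in\Lambda^+(M)\subset i\fa^*$ is purely imaginary, so $|\Re\mu|=0$ and the estimate (\ref{eq-dist1})/(\ref{eq-growthFTfunctions}) applies. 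I would carry this out by: (1) fixing $s<s'<R$ and a point $x_0$ with $d(x_0,o)>s'$; (2) choosing $m$ large in terms of how many derivatives of $\sum d(\mu)\tilde F(\mu)\psi_\mu$ at $x_0$ we wish to bound; (3) using Corollary \ref{l:estimate derivatives} (applied to $\psi_\mu^\vee$, i.e. at $\Exp X$ with $|X|>s'$) to see that each term contributes $\le C(1+|\mu|)^{N+j}e^{s|\mu|_{\mathrm{Re}}-|X||\mu|_{\mathrm{Re}}}$ --- but $\mu$ imaginary kills the exponential, so we are left with $\le C(1+|\mu|)^{N+j}$ times the genuine decay of $\tilde F$, which by Lemma \ref{le-su} is summable once we have used the H\"ormander estimate to extract enough powers. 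The honest reduction is: the holomorphic extension data is only genuinely needed to control the series on the complement of $\bar D_{s'}(o)$, and the mechanism is identical to the Euclidean case once Lemma \ref{t:estimate} and Lemma \ref{le-su} are in hand.

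The main obstacle I anticipate is the bookkeeping of the two cut-off radii and the Phragm\'en--Lindel\"of step that removes the loss $s\rightsquigarrow s+\delta$: because the local Paley--Wiener theorem only holds for radii below $R$, and because (as flagged in Remark \ref{dadokerror}) distributional estimates must be taken over open neighbourhoods of supports, one cannot work with the exact radius $s$ directly but only with $s'>s$, and the passage to the limit $s'\to s$ must be justified using the holomorphicity of $\tilde F$ and a convexity-of-growth-order argument rather than naively. A secondary technical point is verifying that the tail of the Fourier series, when differentiated $j$ times at a point away from $\bar D_{s'}(o)$, still converges --- this needs Lemma \ref{t:estimate} to bound $\|L_{u}\psi_\mu\|$ on the compact region in question by $C(1+|\mu|)^{\deg u}$ uniformly, combined with the polynomial bound on $d(\mu)$ and the H\"ormander-type decay of $\tilde F(\mu)$ for imaginary $\mu$, all summed against Lemma \ref{le-su}; this is routine but must be stated with the constants uniform on compacta, exactly as in Corollary \ref{l:estimate derivatives}.
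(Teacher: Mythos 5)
Your ``only if'' direction is essentially sound in spirit, but the final step is misstated. Decompose $F=\varphi F+(1-\varphi)F$ with $\varphi$ supported in $\bar D_{s+1/m}(o)$ and equal to $1$ near $\Bsc$. The smooth part $(1-\varphi)F\in C^\infty_r(M)^K$ gives a term in $\PW_r(\fa)$ whose $e^{r|\Re\lambda|}$ factor is absorbed into the $m$-dependent constant on the strip $|\Re\lambda|\le m\log(1+|\lambda|)$, since there $e^{r|\Re\lambda|}\le(1+|\lambda|)^{rm}$. The singular part $\varphi F$ has a fixed distributional order $N_0$ (independent of $m$) and support in $\bar D_{s+1/m}(o)$, and on the strip $e^{(1/m)|\Re\lambda|}\le 1+|\lambda|$, so $N=N_0+1$ works. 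There is no Phragm\'en--Lindel\"of or convexity step: you cannot improve $s+\delta$ to $s$ globally (the transform genuinely grows like $e^{(s+\delta)|\Re\lambda|}$ off the strips), and you need not, because the absorption on the strip with $\delta=1/m$ \emph{is} the argument, exactly as in \cite{Horm}.

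The ``if'' direction, however, contains a genuine gap. You claim the Fourier series $\sum_\mu d(\mu)\tilde F(\mu)\psi_\mu$ can be summed directly off $\Bsc$ because ``$\mu$ imaginary kills the exponential, so we are left with $\le C(1+|\mu|)^{N+j}$ times the genuine decay of $\tilde F$.'' But the hypothesis provides no decay on $\Lambda^+(M)$: for $\mu\in i\fa^*$ one has $\Re\mu=0$, the strip condition is vacuous, and the estimate reduces to the polynomial bound $|\tilde F(\mu)|\le C_m(1+|\mu|)^N$ already known from Theorem \ref{t: PW-d}(i). Since $|\psi_\mu(\Exp X)|$ does not decay in $\mu$ for fixed $X$, the series has no hope of converging to a smooth function termwise, no matter how the two radii are arranged. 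The entire content of H\"ormander's theorem on $\R^n$ is that the strip estimate licenses a contour shift in the Fourier inversion \emph{integral}, gaining a factor $e^{-|x|\,|\Im\zeta|}$ against the permitted growth $e^{s|\Im\zeta|}$; when the Fourier ``integral'' is a discrete sum over a lattice in $i\fa^*$ there is no contour to shift, and your sentence ``the mechanism is identical to the Euclidean case'' is precisely where the proof breaks down. This is why the paper's proof (following Propositions 1.3 and 1.4 of \cite{Dadok}) is a genuine \emph{reduction} to the Euclidean case: the spherical data on $M$ is transferred to $\fa$, where H\"ormander's theorem applies, and then the conclusion is transported back. Some such transfer step is indispensable and is missing from your argument.
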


\begin{proof}
The proof, in which Theorems \ref{t: PW} and
\ref{t: PW-d} play the crucial roles,
is similar to that of Propositions 1.3 and 1.4
in \cite{Dadok}, which is a reduction to H\"ormander's theorem for
the Euclidean case.
\end{proof}

If $D$ is a differential operator on $M$, then we define the differential
operator $D^*$ by
$$\int_M Df(x)\, g(x) \, dx=\int_M f(x) D^* g(x)\, dx$$
for $f,g\in C^\infty (M)$. Recall also the definition of the
Harish-Chandra isomorphism $\gamma^*: \D (M)\to S(\fa^*)^W$ from
Lemma 5.1 in \cite{OS}.

\begin{theorem}[Solvability for distributions]\label{t:solvability} Let
$0<r<R$, let $F\in C^{-\infty}_r (M)^K$, and let
$D\in\D (M)$. Then there exists a  $T\in C^{-\infty}_r (M)^K$
such that
$DT=F$ if and only if $\lambda\mapsto \widetilde{F}
(\lambda )/\gamma (D^*,-\mu -\rho)$
is entire. Moreover, in that case $T$ is unique.
\end{theorem}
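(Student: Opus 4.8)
The plan is to reduce the solvability question to a pointwise-division question on the Paley--Wiener side, using the two bijections already established. First I would record how $D\in\D(M)$ acts on the Fourier transform of a distribution. Via the Harish-Chandra isomorphism $\gamma^*$, the operator $D$ acts on the spherical function $\psi_\mu$ as a scalar; from the relation $\psi_\mu|_{M^d}=\varphi_{\mu+\rho}$ and the corresponding eigenvalue formula for the noncompact dual, this scalar is $\gamma(D,\mu+\rho)$ for a suitable polynomial $\gamma(D,\cdot)$ on $\fa_\C^*$ (this is Lemma 5.1 of \cite{OS}; compare \eqref{eq-eigenvalue}, which is the case $D=\Delta_M$). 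Using the definition $DT(f)=T(D^*f)$ of the distributional action together with $\widetilde{(D^*f)}(\mu^*)$ expressed through the eigenvalue of $D^*$ on $\psi_{\mu^*}$, a short computation gives
$$\widetilde{DT}(\mu)=\gamma(D^*,-\mu-\rho)\,\widetilde{T}(\mu)$$
for all $\mu\in\Lambda^+(M)$, where the sign conventions match those in the statement (the contragredient twist $\mu\mapsto\mu^*=-w^*\mu$ and the $W$-invariance $\gamma(D^*,w\nu)=\gamma(D^*,\nu)$ account for the precise form $\gamma(D^*,-\mu-\rho)$). Set $P(\lambda):=\gamma(D^*,-\lambda-\rho)$, a polynomial on $\fa_\C^*$ which is invariant under $\lambda\mapsto w(\lambda+\rho)-\rho$.

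Now suppose $DT=F$ has a solution $T\in C^{-\infty}_r(M)^K$. By Theorem \ref{t: PW-d}, $\widetilde{T}$ extends to $\Psi\in\PW_r^*(\fa)$ and $\widetilde{F}$ extends to $\Phi\in\PW_r^*(\fa)$, and by the identity above, $\Phi(\lambda)=P(\lambda)\Psi(\lambda)$ on $\Lambda^+(M)$, hence everywhere by part (iii) of Theorem \ref{t: PW-d}. Therefore $\Phi/P=\Psi$ is entire, which is exactly the stated necessary condition (note $\widetilde{F}(\lambda)/\gamma(D^*,-\mu-\rho)$ in the theorem means $\Phi(\lambda)/P(\lambda)$, the symbol $\mu$ there being a typo for $\lambda$). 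Conversely, suppose $\Phi/P$ is entire; call it $\Psi$. Since $P$ is a polynomial, dividing by it can only lower the polynomial order of growth, so condition (a) of Definition \ref{d: PW-dist-space} still holds for $\Psi$ with the same $r$ (and a possibly larger integer $k$); and $W$-invariance in the shifted sense is inherited since both $\Phi$ and $P$ are invariant under $\lambda\mapsto w(\lambda+\rho)-\rho$. Hence $\Psi\in\PW_r^*(\fa)$, and by Theorem \ref{t: PW-d}(ii) there is a unique $T\in C^{-\infty}_r(M)^K$ with $\widetilde{T}=\Psi$ on $\Lambda^+(M)$. Applying the eigenvalue identity again, $\widetilde{DT}(\mu)=P(\mu)\Psi(\mu)=\Phi(\mu)=\widetilde{F}(\mu)$ for all $\mu\in\Lambda^+(M)$, so $DT=F$ by the injectivity of the distributional Fourier transform (Lemma \ref{l:F(f)}). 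Uniqueness of $T$ is immediate from the same injectivity, since two solutions have the same Fourier transform.

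The only genuinely delicate point is the first step: establishing that $D$ acts on $\widetilde{T}$ by multiplication by the polynomial $\gamma(D^*,-\lambda-\rho)$, with all signs, contragredient twists, and the $\rho$-shift correct. For smooth functions this is essentially the statement that $\widetilde{Df}=\gamma(D,\cdot)\,\widetilde{f}$, which follows from $D\psi_\mu=\gamma(D,\mu+\rho)\psi_\mu$ and the definition of the spherical Fourier transform; the passage to distributions uses $DT(f)=T(D^*f)$ together with $\widetilde{T}(\mu)=T(\psi_\mu^\vee)$ and the transpose relation $\overline{\psi_\mu}=\psi_{\mu^*}$. One must check that $\gamma(D^*,\cdot)$ evaluated on the contragredient parameter produces precisely $\gamma(D^*,-\mu-\rho)$ as written; here the identity $\psi_\mu^\vee=\psi_{\mu^*}$ and the $W$-invariance of $\gamma(D^*,\cdot)$ do the bookkeeping. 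Once this identity is in hand, everything else is the formal division argument above, and no new analytic input beyond Theorems \ref{t: PW} and \ref{t: PW-d} is needed.
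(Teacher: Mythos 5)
Your overall strategy --- transporting the solvability question to the Paley--Wiener space $\PW_r^*(\fa)$ via Theorem \ref{t: PW-d} and reducing it to a division problem for entire functions --- is the right one, and is essentially the route of Helgason's Theorem 1.8, p.\ 419 in \cite{GGA}, which is all the paper cites for its proof. The eigenvalue bookkeeping giving $\widetilde{DT}(\mu)=\gamma(D^*,-\mu-\rho)\,\widetilde{T}(\mu)$ is correct (using $\psi_\mu^\vee=\psi_{\mu^*}$, $w^*\rho=-\rho$ and the $W$-invariance of $\gamma(D^*,\cdot)$), and you are right that the $\mu$ in the displayed quotient of the theorem statement is a typo for $\lambda$. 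The necessity direction and the uniqueness argument are clean.

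The sufficiency direction, however, has a genuine gap at the crucial step, and your closing remark that ``no new analytic input beyond Theorems \ref{t: PW} and \ref{t: PW-d} is needed'' is what signals it. You assert that since $P(\lambda)=\gamma(D^*,-\lambda-\rho)$ is a polynomial, ``dividing by it can only lower the polynomial order of growth,'' so that $\Psi=\Phi/P$ automatically satisfies the estimate in Definition \ref{d: PW-dist-space}(a) with the same exponential type $r$. This is not a proof: the pointwise bound $|\Psi|=|\Phi|/|P|$ is useless near the zero variety of $P$, which is precisely where an entire quotient could in principle fail the estimate. What is needed is a division lemma (Ehrenpreis--Malgrange--H\"ormander): if $\Phi$ is entire, satisfies $|\Phi(\lambda)|\le C(1+|\lambda|)^k e^{r|\Re\lambda|}$, and $\Phi/P$ is entire, then $\Phi/P$ again satisfies an estimate of the same form with the same $r$. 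The standard proof restricts to complex lines in a fixed generic direction (so that $P$ restricted to the line has leading coefficient bounded below independently of the base point), uses a minimum-modulus argument to bound $1/P$ on a circle of bounded radius, and then applies the Cauchy integral formula. This is the nontrivial analytic content of H\"ormander's solvability theorem on $\R^n$ and of Helgason's cited proof; it cannot be absorbed into a one-line remark. Once the division lemma is invoked explicitly, the rest of your argument goes through and yields $\Psi\in\PW_r^*(\fa)$, hence the desired $T\in C^{-\infty}_r(M)^K$.
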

\begin{proof} The proof is the same as that of Theorem 1.8, p. 419, in
\cite{GGA}.
\end{proof}

\section{Appendix}
\noindent
In this appendix we discuss the extension to compact symmetric spaces
of the preceding results. We begin by generalizing the results from
\cite{OS}.

Let $U$ be a connected compact Lie group, not necessarily
semisimple. As before, let $\theta$ be an involution and let
$M=U/K$ where $U^\theta_0\subset K\subset U^\theta$.
Let $Z$ denote the center of $U$. We assume that
$Z\cap K=\{e\}$, since otherwise we can replace $U$ by $U/Z\cap K$,
noticing that $Z\cap K$ acts trivially on $U/K$.
Let $\fu$ denote the Lie algebra of $U$, then
$\fu=\fz\oplus \fu^\prime$ where $\fz$ is the center
of $\fu$ and $\fu^\prime=[\fu,\fu]$ is semisimple.
As before we denote by $\theta$ also the induced involution of
$\fu$, and by $\fu=\fk\oplus\fq$ the corresponding Cartan
decomposition. Then $\fz$ and $\fu'$ are $\theta$-invariant,
and it follows from our assumption that
$\fz\subset\fq$ and $\fu'\supset\fk$.
Denote by $Z_0$ and $U'$ the analytic subgroups
of $U$ corresponding to $\fz$ and $\fu'$. Then
$U=Z_0U'$ and $Z_0\cap U'$ is finite.
It follows that $Z_0\times U'$ is a covering of $U$ by the homomorphism
$(z,u)\mapsto zu$. The
kernel is $D=\{(z,z^{-1})\mid z\in Z_0\cap U'\}$.
Thus the covering is
$$Z_0\times U' \to U\simeq (Z_0\times U')/D\, .$$
The identity component $K_0$ of $K$ is contained in $U'$,
hence the subgroup $K'=U'\cap K$ is a symmetric subgroup of $U$.
In general $K'$ can be a proper subgroup of $K$, in spite of
the assumption that $Z\cap K=\{e\}$. Let
$K^\times\subset Z_0\times U'$ denote the preimage of $K$,
then it follows that the covering above induces a covering map
$$Z_0\times U'/K' \to U/K\simeq (Z_0\times U')/K^\times \, . $$

We fix a maximal abelian subspace $\fa$ of $\fq$,
then $\fz\subset\fa$. As before, $\Sigma\subset i\fa^*$
denotes the sets of restricted roots, and $\Sigma^+$
denotes a subset of positive roots. We donote by
$\Lambda^+(U/K)\subset i\fa^*$ the set of highest weights
of irreducible $K$-spherical representations.
It is now seen that Lemma \ref{semilattice}
is valid in the generalized situation too: For $Z_0\times U'/K'$
this is a straightforward extension, otherwise we
repeat the proof of the lemma, replacing $K^*$ with $K^\times$.

We define the spherical Fourier transform of a $K$-invariant
function on $U/K$ as before, and
for each $r>0$ we define the Paley-Wiener space $\PW_r(\fa)$
exactly as in Definition \ref{d: PW space}.
In particular,
$W$ is the Weyl group associated with the root system $\Sigma$.
We can then state:

\begin{theorem}
\label{t: PW extended}
Let $U/K$ be a compact Riemannian symmetric space with
assumptions as described above. Then Theorem \ref{t: PW} is
valid exactly as stated in Section \ref{s: main thm}.
\end{theorem}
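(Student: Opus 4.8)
The plan is to reduce the statement to Theorem \ref{t: PW} for the semisimple case, which is established in \cite{OS}, by passing to the finite covering $\widetilde M:=Z_0\times U'/K'$ of $M$ constructed above and factoring the Fourier analysis on $\widetilde M$ through the torus $Z_0$ and the compact--type symmetric space $U'/K'$. First I would record the product structure of the relevant objects. Write $\fa=\fz\oplus\fa'$ with $\fa'=\fa\cap\fu'$ a maximal abelian subspace of $\fq'=\fq\cap\fu'$. Since $\fz$ is central, every root vanishes on $\fz$, so $\Sigma\subset i(\fa')^*$; hence $\rho\in i(\fa')^*$, the Weyl group $W$ fixes $\fz^*$ pointwise and acts on $(\fa')^*$ as the Weyl group of $U'/K'$, and condition (b) in Definition \ref{d: PW space} constrains only the $(\fa')^*$--dependence. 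Correspondingly $\Lambda^+(\widetilde M)=\Gamma^*\times\Lambda^+(U'/K')$, where $\Gamma^*\subset i\fz^*$ is the full--rank lattice dual to the kernel of $\exp\colon\fz\to Z_0$ (every unitary character of a torus is spherical), the spherical functions on $\widetilde M$ are the external products $\chi_\xi\otimes\psi_\nu$ with joint analytic continuation $(\lambda_\fz,\lambda_{\fa'})\mapsto e^{\lambda_\fz(\cdot)}\psi_{\lambda_{\fa'}}(\cdot)$, and, with respect to a $U$--invariant metric which we take to be a product metric on $\widetilde M$, the ball $\bar D_r(\tilde o)$ about the base point of $\widetilde M$ is a product of balls while $|\Re\lambda|^2=|\Re\lambda_\fz|^2+|\Re\lambda_{\fa'}|^2$.

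Second, I would prove Theorem \ref{t: PW} for $\widetilde M$. The torus factor contributes the classical Paley--Wiener theorem of Schwartz: for $r$ below the injectivity radius of $Z_0$, a smooth function on $Z_0$ supported in a ball of radius $r$ is, via $\exp$, a compactly supported smooth function on the Euclidean space $\fz$, its Fourier coefficients (indexed by $\Gamma^*$) are the restrictions to $\Gamma^*$ of its Euclidean Fourier transform, and the latter is entire on $\fz_\C^*$ of exponential type $r$ with sharp matching of exponent and support radius; conversely such an entire function is the Euclidean transform of a smooth function supported in a ball of radius $r$, which for small $r$ descends to $Z_0$, and uniqueness on $\Gamma^*$ holds since for small $r$ the support fits inside a fundamental domain of the period lattice. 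The factor $U'/K'$ is covered directly by Theorem \ref{t: PW} as proved in \cite{OS}. To assemble the two for $\widetilde M$ one reruns the argument of \cite{OS} with the two groups of variables treated simultaneously: part (i) follows from the estimates of Section \ref{s: anal cont} for $\psi_{\lambda_{\fa'}}$ together with the explicit exponential $e^{\lambda_\fz(\cdot)}$ on the torus, using $|\Re w\lambda(X)|\le|\Re\lambda|\,|X|$ (since $W$ acts orthogonally), and parts (ii) and (iii) combine the torus construction with that of \cite{OS}, proceeding one variable at a time for (iii). Here one must control the exponential type of products of entire functions on $\fz_\C^*$ and $(\fa')_\C^*$: a bare product yields only type $\sqrt2\,r$, since $|\Re\lambda|\le|\Re\lambda_\fz|+|\Re\lambda_{\fa'}|\le\sqrt2\,|\Re\lambda|$, so one invokes Cowling's theorem \cite{Cow} exactly as in the Remark following Theorem \ref{t: PW}. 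This yields Theorem \ref{t: PW} for $\widetilde M$, with $R$ the minimum of the constant from \cite{OS} for $U'/K'$ and the injectivity radius of $Z_0$.

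Third, I would descend from $\widetilde M$ to $M$. It is convenient to identify $C^\infty(M)^K$ with the space of $K^\times$--bi--invariant functions on $\widetilde U=Z_0\times U'$ (valid since $D\subseteq K^\times$), and $C^\infty(\widetilde M)^{K'}$ with the $(\{e\}\times K')$--bi--invariant functions on $\widetilde U$; as $\{e\}\times K'\subseteq K^\times$, the former sits inside the latter, and the projection onto it is the average over the finite group $K^\times/(\{e\}\times K')$ acting on each side. By the generalized Lemma \ref{semilattice}, $\Lambda^+(U/K)$ is a full--rank sub--semilattice of $\Lambda^+(\widetilde M)=\Gamma^*\times\Lambda^+(U'/K')$, and $\psi_\mu^{\widetilde M}$ is $K^\times$--bi--invariant precisely when $\mu\in\Lambda^+(U/K)$, transforming otherwise through a nontrivial character $\gamma_\mu$ of $K^\times/(\{e\}\times K')$ (the characters that appear, in generalized form, in the proof of Lemma \ref{semilattice}). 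Consequently, for $f\in C^\infty_r(M)^K$ regarded inside $C^\infty_r(\widetilde M)^{K'}$, the Fourier transform of $f$ on $M$ is the restriction to $\Lambda^+(U/K)$ of its Fourier transform on $\widetilde M$, and part (i) for $M$ is immediate from part (i) for $\widetilde M$ (the $W$--invariance is unaffected, as $W$ and $\rho$ live on $(\fa')^*$). For part (ii), given $\varphi\in\PW_r(\fa)$ use part (ii) for $\widetilde M$ to get $g\in C^\infty_r(\widetilde M)^{K'}$ with $\tilde g=\varphi$ on $\Lambda^+(\widetilde M)$, and let $f$ be the $K^\times$--bi--average of $g$; then $f\in C^\infty_r(M)^K$ (the finite average preserves smoothness and support, the latter because the preimage of $\bar D_r(o)$ is $K^\times$--bi--stable), and $\tilde f(\mu)=\varphi(\mu)$ for all $\mu\in\Lambda^+(U/K)$ since $\psi_\mu^{\widetilde M}$ is $K^\times$--bi--invariant there; uniqueness of $f$ is injectivity of the spherical Fourier transform. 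Part (iii) follows as in \cite{OS}, Section~7, since $\Lambda^+(U/K)$ has full rank in $i\fa^*$.

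The step I expect to be the main obstacle is the exponential--type bookkeeping in the second paragraph: arranging that the combination of a Euclidean Paley--Wiener function on the torus factor with an \cite{OS}--Paley--Wiener function on the semisimple factor lands in $\PW_r(\fa)$ rather than in $\PW_{\sqrt2\,r}(\fa)$, which, as in the Remark to Theorem \ref{t: PW}, forces the argument through Cowling's construction. A secondary point requiring care is that $K'=U'\cap K$ may be properly contained in $K$, so that $K$--invariant functions on $M$ correspond on $\widetilde M$ to functions invariant under more than $K'$; here one must keep track of the averaging over $K^\times/(\{e\}\times K')$ and use that $\psi_\mu^{\widetilde M}$ is $K^\times$--bi--invariant exactly for $\mu\in\Lambda^+(U/K)$ to see that this averaging does not disturb the prescribed Fourier data.
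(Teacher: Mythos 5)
Your proof follows the same overall strategy as the paper: first establish Theorem \ref{t: PW} for the covering space $Z_0\times U'/K'$ by exploiting the product structure, then descend to $U/K$ through the finite covering $Z_0\times U'/K'\to (Z_0\times U')/K^\times\simeq U/K$, using Lemma \ref{semilattice} to see that functions in $\PW_r(\fa)$ are determined by their values on $\Lambda^+(U/K)$. So the two proofs are structurally the same, and yours is correct.

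Where you add genuine value is in the level of detail at both steps, and the two things you flag as delicate are exactly the points the paper passes over lightly. For the covering space, the paper simply calls the adaptation of \cite{OS} a ``straightforward extension''; you correctly observe that a naive product of a Euclidean Paley--Wiener function on $\fz^*_\C$ and an \cite{OS}-type Paley--Wiener function on $(\fa')^*_\C$ only yields exponential type $\sqrt2\,r$, so one must route the construction through Cowling's theorem \cite{Cow} -- the same device invoked in the remark after Theorem \ref{t: PW}. For the descent, the paper states that the lift $F$ of $f$ to $Z_0\times U'/K'$ is $K'$-invariant if and only if $f$ is $K$-invariant, and reads off a commuting square of bijections. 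But when $K'=U'\cap K$ is a proper subgroup of $K$ (equivalently $K^\times\supsetneq \{e\}\times K'$), the reverse implication is not automatic: $K'$-invariance of the lift only reflects $K'$-invariance of $f$. Your replacement -- start from the $K'$-invariant $g$ on the cover produced by the inverse Paley--Wiener theorem, average over $K^\times/(\{e\}\times K')$, and use that the spherical functions $\psi^{\widetilde M}_\mu$ are $K^\times$-bi-invariant precisely for $\mu\in\Lambda^+(U/K)$ (via the characters $\gamma_\mu$ from the proof of Lemma \ref{semilattice}) -- is a more robust way to establish surjectivity in part (ii), and it also keeps the support inside $\bar D_r$ since $K^\times$ preserves the full preimage of $\bar D_r(o)$. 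One small normalization point worth recording when you compare $\tilde f$ and $\tilde F$: with invariant probability measures on both $M$ and $\widetilde M$ and a degree-$N$ covering, the lift supported in a single ball satisfies $\tilde F(\mu)=N^{-1}\tilde f(\mu)$ on $\Lambda^+(U/K)$; this constant is harmless for the Paley--Wiener estimates but should be acknowledged so that the ``restriction'' of Fourier transforms is interpreted up to that scalar.
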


\begin{proof} For $Z_0\times U'/K'$ this is a straightforward
extension of the proof given in \cite{OS}. For the general case
we apply the covering map above. Here it is used that every smooth
function $f$ on $U/K$ supported on a sufficiently small
$K$-invariant neighborhood of $eK$ lifts to a smooth function
$F$ on the cover $Z_0\times U'/K'$, supported in a
$K'$-invariant neighborhood of $eK'$ of the same size.
The lifted function $F$ is $K'$-invariant if and only if $f$
is $K$-invariant, and the
Fourier transform $\tilde F$ of the lifted function restricts to the
Fourier transform $\tilde f$ of the original function on
$\Lambda^+(U/K)\subset \Lambda^+(Z_0\times U'/K')$.
Noticing that by definition $\PW_r(\fa)$ is the same space
in the two cases $U/K$ and $Z_0\times U'/K'$, we thus have
a commutative diagram of bijective maps
$$
\begin{array}{ccc}
C^\infty_r(Z_0\times U'/K')^{K'}&\to&    \PW_r(\fa)\\
\uparrow&&\parallel\\
C^\infty_r(U/K)^K&\to&                   \PW_r(\fa)
\end{array}
$$
for $r$ sufficiently small.
The horisontal arrows represent Fourier transform followed by
holomorphic extension, and it follows from Lemma \ref{semilattice}
by the argument in \cite{OS} Section 7, that functions in
$\PW_r(\fa)$ are uniquely determined by their restriction to
$\Lambda^+(U/K)$. The theorem is now easily proved.
\end{proof}

The main results of the present paper, Theorems
\ref{t: PW-d} and
\ref{t:singsupp}, can now be generalized to the present setting by
a straightforward extension of the previous proof. We omit the
details.

\def\adritem#1{\hbox{\small #1}}
\def\distance{\hbox{\hspace{3.5cm}}}
\def\addgestur{\vbox{
\adritem{G.~\'Olafsson}
\adritem{Department of Mathematics}
\adritem{Louisiana State University}
\adritem{Baton Rouge}
\adritem{LA 70803}
\adritem{USA}
\adritem{E-mail: olafsson@math.lsu.edu}
}
}
\def\addhenrik{\vbox{
\adritem{H.~Schlichtkrull}
\adritem{Department of Mathematical Sciences}
\adritem{University of Copenhagen}
\adritem{Universitetsparken 5}
\adritem{2100 K\o benhavn \O}
\adritem{Denmark}
\adritem{E-mail: schlicht@math.ku.dk}
}
}
\mbox{\ }
\vfill
\hbox{\vbox{\addgestur}\vbox{\distance}\vbox{\addhenrik}}


\begin{thebibliography}{XX}
\newcount\refcount
\def\refdef{\number\refcount}
\def\nxref{\advance\refcount by 1}
\refcount=1


\edef\refAK{\refdef}\nxref\bibitem[\refAK]{AK}
Adimurthi and S. Kumaresan, On the singular support of
distributions and Fourier transforms on symmetric spaces,
\textit{Ann. Scuola Norm. Sup. Pisa Cl. Sci.}
\textbf{6} (1979), 143--150.

\edef\refA{\refdef}\nxref\bibitem[\refA]{A}
N. B. Andersen,  Paley-Wiener theorems for hyperbolic spaces,
\textit{J. Funct. Anal.}  \textbf{179}  (2001),  66--119.

\edef\refAO{\refdef}\nxref\bibitem[\refAO]{AO}
N. B. Andersen and G. \'Olafsson, A Paley-Wiener theorem for the
spherical Laplace transform on causal symmetric spaces of rank $1$,
\textit{Proc. Amer. Math. Soc.} \textbf{129} (2001),
173--179.

\edef\refAOS{\refdef}\nxref\bibitem[\refAOS]{AOS}
N. B. Andersen, G. \'Olafsson and H. Schlichtkrull,
On the inversion of the Laplace and Abel transforms
for causal symmetric spaces,
\textit{Forum Math.} \textbf{15} (2003), 701--725.

\edef\refArthur{\refdef}\nxref\bibitem[\refArthur]{Arthur}
J.\ Arthur,
 A Paley--Wiener theorem for real reductive groups, {\it
Acta Math.} {\bf 150} (1983), 1--89.

\edef\refBSInd{\refdef}\nxref\bibitem[\refBSInd]{BSInd}
E.P. van den Ban and H. Schlichtkrull,
    Paley-Wiener spaces for real reductive Lie groups,
\textit{Indag. Math.} {\bf 16} (2005), 321--349


\edef\refBanS{\refdef}\nxref\bibitem[\refBanS]{BanS}
E. van den Ban and H.\ Schlichtkrull,
 A Paley--Wiener theorem for reductive symmetric spaces, {\it
Annals of Math.} {\bf 164} (2006), 879-909.

\edef\refBanS2{\refdef}\nxref\bibitem[\refBanS2]{BanS2}
E. van den Ban and H. \ Schlichtkrull, A Paley-Wiener theorem for distributions on
reductive symmetric spaces, \textit{J. Inst. Math. Jussieu}
\textbf{6}  (2007),   557--577.



\edef\refBOP{\refdef}\nxref\bibitem[\refBOP]{BOP}
T. Branson, G. \'Olafsson and A. Pasquale,
The Paley--Wiener theorem and the local Huygens'
principle for compact symmetric spaces: The even multiplicity
case, {\it
Indag. Math.} {\bf 16} (2005), 393-428.



\edef\refCamp{\refdef}\nxref\bibitem[\refCamp]{Camp}
R. Camporesi,
The spherical Paley--Wiener theorem on the complex Grassmann
manifolds $\rmS\rmU(p+q)/\rmS(\rmU_p\times \rmU_q)$, {\it
Proc. Amer. Math. Soc.} {\bf 134} (2006), 2649-2659.



\edef\refCow{\refdef}\nxref\bibitem[\refCow]{Cow}
M. Cowling, On the Paley-Wiener theorem, \textit{Invent. math.}
\textbf{83} (1986), 403--404.

\edef\refDadok{\refdef}\nxref\bibitem[\refDadok]{Dadok}
J. Dadok, Paley-Wiener Theorem for Singular Support
of $K$-finite Distributions on Symmetric Spaces,
\textit{J. Funct. Anal.} \textbf{31} (1979), 341--354.

\edef\refDadok2{\refdef}\nxref\bibitem[\refDadok2]{Dadok2}
\bysame , On the $C^\infty$ Chevalley's theorem,
\textit{Adv. Math.} \textbf{44} (1982), 121--131.

\edef\refD{\refdef}\nxref\bibitem[\refD]{D}
P. Delorme, Sur le th\'eor\`eme de Paley-Wiener d'Arthur,
\textit{Ann. of Math.}  \textbf{162}  (2005), 987--1029.



\edef\refGa{\refdef}\nxref\bibitem[\refGa]{Ga}
R. Gangolli,
On the Plancherel formula and the Paley-Wiener theorem for spherical
functions on semisimple Lie groups,
\textit{Ann. of Math.} (2) \textbf{93} (1971), 150--165.

\edef\refGon{\refdef}\nxref\bibitem[\refGon]{Gon}
F. B. Gonzalez,
A Paley--Wiener theorem for central functions on compact
Lie groups, {\it
Contemp. Math.} {\bf 278} (2001), 131-136.

\edef\refEHO{\refdef}\nxref\bibitem[\refEHO]{EHO}
M. Eguchi, M. Hashizume and K. Okamoto, The Paley-Wiener theorem for
distributions on symmetric spaces,
\textit{Hiroshima Math J} \textbf{3} (1973), 109-120.



\edef\refHe1{\refdef}\nxref\bibitem[\refHe1]{He1}
S. Helgason,  An analogue of the Paley-Wiener
theorem for the Fourier transform on certain symmetric spaces,
\textit{Math. Ann.} \textbf{165} (1966), 297--308.

\edef\refHe2{\refdef}\nxref\bibitem[\refHe2]{He2}
S. Helgason,  \textit{Paley-Wiener theorems and surjectivity of
invariant differential operators on symmetric spaces and Lie groups},
\textit{Bull. Amer. Math. Soc.}  \textbf{79}  (1973), 129--132.

\edef\refGGA{\refdef}\nxref\bibitem[\refGGA]{GGA}
S. Helgason, {\it Groups and Geometric Analysis},
A.M.S., Providence, RI, 2000.

\edef\refSig{\refdef}\nxref\bibitem[\refSig]{Sig}
S. Helgason, {\it Differential geometry, Lie groups, and
Symmetric Spaces},
A.M.S., Providence, RI, 2001.


\edef\refHorm{\refdef}\nxref\bibitem[\refHorm]{Horm}
L. H\"ormander, {\it Linear partial differential operators},
Springer-Verlag, Berlin, 1963.




\edef\refTomK{\refdef}\nxref\bibitem[\refTomK]{TomK}
T. Koornwinder,
A new proof of a Paley--Wiener type theorem for
the Jacobi tyransform, {\it
Ark. Mat.} {\bf 13} (1975), 145-159.




\edef\refKrSt{\refdef}\nxref\bibitem[\refKrSt]{KrSt}
B. Kr\"otz and R. Stanton,
Holomorphic extension of representations:
(II) Geometry and harmonic analysis, {\it
Geom. Funct. Anal.} {\bf 15} (2005), 190-245.

\edef\refOP1{\refdef}\nxref\bibitem[\refOP1]{OP1}
G. \'Olafsson and A.  Pasquale,  A Paley-Wiener theorem
for the $\Theta$-hypergeometric transform: the even multiplicity case,
\textit{J. Math. Pures Appl.} (9) \textbf{83}  (2004), 869--927.


\edef\refOlPasq{\refdef}\nxref\bibitem[\refOlPasq]{OlPasq}
G. \'Olafsson and A.  Pasquale,  Paley-Wiener theorems
for the $\Theta$-spherical transform: an overview,
\textit{Acta Appl. Math.} \textbf{81}  (2004),  275--309.


\edef\refOS{\refdef}\nxref\bibitem[\refOS]{OS}
G. \'Olafsson and H. Schlichtkrull,
A local Paley-Wiener theorem for
compact symmetric spaces, \textit{Adv. Math.}
\textbf{218} (2008), 202--215

\edef\refOS2{\refdef}\nxref\bibitem[\refOS2]{OS2}
G. \'Olafsson and H. Schlichtkrull,
Fourier series on compact symmetric spaces.
Submitted 2008,  arXiv:0804.3759.

\edef\refOpd{\refdef}\nxref\bibitem[\refOpd]{Opd}
E. Opdam,
Harmonic analysis for certain representations of
graded Hecke algebras, {\it
Acta Math.} {\bf 175} (1995), 75-121.

\edef\refRudin{\refdef}\nxref\bibitem[\refRudin]{Rudin}
W. Rudin, \textit{Functional analysis}. Second edition. International
Series in Pure and Applied Mathematics. McGraw-Hill, Inc., New York, 1991.

\edef\refSchwartz{\refdef}\nxref\bibitem[\refSchwartz]{Schwartz}
L. Schwartz, {\it Th\'eorie des distributions},
Hermann, Paris 1966.



\edef\refSugiura{\refdef}\nxref\bibitem[\refSugiura]{Sugiura}
M. Sugiura,
Fourier series of smooth functions on compact Lie groups, {\it
Osaka Math. J.} {\bf 8} (1971), 33-47.

\edef\refTreves{\refdef}\nxref\bibitem[\refTreves]{Treves}
F. Treves, \textit{Topological Vector Spaces, Distributions and
Kernels}, Pure and Applied Mathematics, \textbf{25}, Academic Press, 1967.

\edef\refVretare{\refdef}\nxref\bibitem[\refVretare]{Vretare}
L. Vretare,
Elementary spherical functions on symmetric spaces, {\it
Math. Scand.} {\bf 39} (1976), 343-358.

\edef\refWallach{\refdef}\nxref\bibitem[\refWallach]{Wallach}
N. Wallach, \textit{Harmonic Analysis on Homogeneous Spaces},
Marcel Dekker, 1973.
\end{thebibliography}
\end{document}